\documentclass[12pt]{amsart}
\usepackage{tikz}
\thispagestyle{empty}
\usetikzlibrary{calc, arrows, decorations.markings} \tikzset{>=latex}
\usepackage{amsmath,amssymb,amsfonts}
\usepackage{amscd, amssymb, latexsym, amsmath, amscd}
\usepackage[all]{xy}
\usepackage{pb-diagram}
\usepackage{enumerate}
\usepackage{anysize}
\usepackage[sc]{mathpazo} 
\usepackage{enumitem}
\marginsize{2.5cm}{2.5cm}{2.5cm}{2.5cm}
\linespread{1.05}
\usepackage{graphics}
\usepackage{color}
\DeclareFontFamily{U}{wncy}{}
    \DeclareFontShape{U}{wncy}{m}{n}{<->wncyr10}{}
    \DeclareSymbolFont{mcy}{U}{wncy}{m}{n}
    \DeclareMathSymbol{\Sh}{\mathord}{mcy}{"58}

\marginsize{3cm}{3cm}{3cm}{3cm}
\input xy
\xyoption{all}
\usepackage{pb-diagram}
\usepackage[all]{xy}
\input xy
\xyoption{all}
\theoremstyle{plain}
\newtheorem{thm}{Theorem}

\newtheorem{lemma}{Lemma}

\newtheorem{corollary}{Corollary}

\newtheorem{proposition}{Proposition}

\newtheorem{conj}{Conjecture}

\theoremstyle{definition} \theoremstyle{definition}
\newtheorem{remark}{Remark}
\newtheorem{question}{Question}
\newtheorem{example}{Example}
\newtheorem{defn}[thm]{Definition}

\theoremstyle{remark}

\newcommand{\G}{\textsc{\G}}

\newcommand{\Q}{\mathbb{Q}}
\newcommand{\LG}{{}^L{\G}}

\newcommand{\ad}{{\rm ad}}

\newcommand{\U}{\mathcal{U}}
\newcommand{\Pa}{\mathcal{P}}
\newcommand{\Z}{\mathbb{Z}}

\newcommand{\R}{\mathbb{R}}

\newcommand{\C}{\mathbb{C}}
\newcommand{\Si}{\mathbb{S}}

\newcommand{\wG}{\widehat{G}}
\newcommand{\wM}{\widehat{M}}
\newcommand{\wB}{\widehat{B}}
\newcommand{\wP}{\widehat{P}}
\newcommand{\wN}{\widehat{N}}
\newcommand{\wZ}{\widehat{Z}}
\newcommand{\wT}{\widehat{T}}

\newcommand{\der}{\rm der}
\newcommand{\scon}{\rm sc}

\newcommand{\Ind}{{\rm Ind}}

\def\G{{\rm G}}
\def\Aut{{\rm Aut}}
\def\SL{{\rm SL}}
\def\Spin{{\rm Spin}}

\def\PSO{{\rm PSO}}
\def\PGSO{{\rm PGSO}}

\def\GSp{{\rm GSp}}
\def\PGSp{{\rm PGSp}}
\def\Sp{{\rm Sp}}

\def\U{{\rm U}}

\def\GL{{\rm GL}}

\def\Gal{{\rm Gal}}
\def\SO{{\rm SO}}
\def\OO{{\rm O}}
\def\Out{{\rm Out}}

\def\ad{{\rm ad\, }}

\begin{document}

\title[Generalizing the MVW involution, and the contragredient]
{Generalizing the MVW  involution, and the contragredient}
\author{Dipendra Prasad}
\address{Tata Institute of Fundamental 
Research, Homi Bhabha Road, Bombay - 400 005, INDIA.}
\email{dprasad@math.tifr.res.in}

\subjclass{Primary 11F70; Secondary 22E55}

\begin{abstract}
For certain quasi-split reductive groups $G$ over a general field $F$, we construct an automorphism $\iota_G$ of $G$
over $F$, well-defined as an element of $\Aut(G)(F)/jG(F)$ where $j:G(F) \rightarrow \Aut(G)(F)$ 
is the inner-conjugation action of $G(F)$ on $G$. The automorphism $\iota_G$ generalizes (although only for quasi-split groups) an involution due to Moeglin-Vigneras-Waldspurger in [MVW] for classical groups 
which takes any 
irreducible admissible  representation 
$\pi$ of $G(F)$ for $G$ a classical group and $F$ a local field, to its contragredient $\pi^\vee$.

The paper also formulates a conjecture on the contragredient
of an 
irreducible admissible  representation 
of $G(F)$ for $G$ a reductive algebraic group over   a local field $F$ 
 in terms of the (enhanced) Langlands parameter of the representation.

\end{abstract}

\maketitle
{\hfill \today}

\tableofcontents

\section{Introduction}
Let $G$ be a reductive algebraic group over   a local field $F$. The paper has two related goals. First, to construct 
an involutive automorphism $\iota_G$ of $G(F)$, generalizing an involution used by Moeglin-Vigneras-Waldspurger in [MVW] for 
classical groups which takes an irreducible admissible representation $\pi$ of $G(F)$ to  $\iota_G(\pi) \cong \pi^\vee$, where 
$\pi^\vee$ is the contragredient of $\pi$. We construct such an involution 
(well-defined as an element of $\Aut(G)(F)/jG(F)$ where $j:G(F) \rightarrow \Aut(G)(F)$ 
is the inner-conjugation action of $G(F)$ on $G$) for a certain class of quasi-split groups, in particular for
those for which $2H^1(\Gal(\bar{F}/F), Z(\Bar{F}))=0$ where $Z$ is the center of $G$.
This 
part of the paper --- although requisitioned by representation theory of $G(F)$ for $F$ a local field --- is purely an exercise
in algebraic groups valid for a general field $F$. 
An example to keep in mind of what we do is best explained by the group $\SO(2n)(F)$ for which $\Aut(G)(F)/jG(F)$ is a large group  
but has a particularly useful element for 
representation theory which is conjugation by any element of $\OO(2n)(F)$ which is not in $\SO(2n)(F)$. 

The involution $\iota_G$, even when it exists, does not necessarily take an irreducible admissible representation of $G(F)$ to its contragredient (see Conjecture 1 for a precise statement),
but perhaps it comes as close as it gets among automorphisms of $G(F)$ with this property, i.e. if there is an automorphism $a_\pi$ of $G(F)$ 
with the property that $a_\pi(\pi) \cong \pi^\vee$, then $\iota_G(\pi) \cong \pi^\vee$.

If $F=\R$, there is a 
description by J. Adams in [Ad] of the contragredient in terms of what he calls a Chevalley involution ---
which fixes a fundamental torus $H_f$ inside $G(\R)$ and acts on it by $t\rightarrow t^{-1}$. We check that our involution
coincides as an element of $\Aut(G)(\R)/G(\R)$ with what Adams constructs if $G$ is quasi-split over $\R$.
 
Secondly, we formulate 
a conjecture on 
the Langlands-Vogan parameter of the  contragredient $\pi^\vee$ 
of an irreducible admissible representation $\pi$ of $G(F)$ 
in terms of the Langlands-Vogan parameter of $\pi$. 
The description of the Langlands-Vogan parameter of $\pi^\vee$ has three ingredients. The first, the 
most obvious being that the parameter must go to the `dual parameter,' effected 
by the Chevalley involution on $\LG$; second, the character of the component group must
go to the dual character; finally, we have to take into account 
the effect on `base point':
since if  $\pi$  has a Whittaker model for a character $\psi: N \rightarrow \C^\times$, 
$\pi^\vee$  has a Whittaker model not for the character $\psi$, but for $\psi^{-1}$.

The two conjectural descriptions of the contragredient --- one in terms of an automorphism of $G(F)$, and the other in terms of the Langlands-Vogan parameter of $\pi$, the first intrinsic to $G(F)$, the second quite extrinsic 
to the structure of $G(F)$ ---  are related in some sense, although neither seems a consequence of the other. 
Actually, the description of the  contragredient in terms of the Langlands-Vogan parameter has larger applicability 
as it sees characters  of component groups which need not be selfdual in which cases the involution $\iota_G$ fails to 
have the desired effect.

\section{Notation and other preliminaries} 
In this paper, $G$ will always stand for a connected reductive algebraic group 
over a local field $F$, which can be either archimedean, or non-archimedean. 
By a connected real reductive 
group, we will mean $G(\R)$ for $G$ a connected reductive algebraic group defined over $\R$; thus $G(\R)$ might well be disconnected such as
$\SO(p,q)(\R)$. We will use $\Si$ to denote the group  of norm 1 elements in $\C^\times$, and $\Si^n$, its $n$-th power.  

Let $W_F$ be the Weil group of $F$, and $W'_F$ the Weil-Deligne group of $F$.
Let ${}^LG(\C) = \widehat{G}(\C) \rtimes W_F$
be the $L$-group of $G$ which comes equipped with a map onto
$W_F$. Often we may simplify notation by denoting ${}^LG(\C)$ by 
${}^LG$, and $\wG(\C)$ by $\wG$.

An admissible homomorphism $\varphi: W'_F\rightarrow {}^LG$
is called a Langlands parameter for $G(F)$.  To an admissible homomorphism 
$\varphi$ is associated the group of connected components $\pi_0(Z_{\varphi} )
= Z_{\varphi}/ Z^0_{\varphi}$ where ${Z_{\varphi}}$ is the centralizer of
$\varphi$ in $\widehat{G}(\C)$, 
and ${Z^0_{\varphi}}$ is its connected component containing the  
identity element. Two Langlands parameters $\varphi_1,\varphi_2: W'_F\rightarrow {}^LG(\C)$
are said to be equivalent if $\varphi_1(w)= g_0\varphi_2(w)g_0^{-1}$ for some $g_0\in \widehat{G}(\C)$.

An {\it enhanced Langlands parameter} is a pair $(\varphi,\mu)$ consisting of an admissible homomorphism 
$\varphi: W'_F\rightarrow {}^LG(\C)$, and an irreducible representation $\mu$ of 
its group of connected components $\pi_0(Z_{\varphi})$. 
According to the Langlands-Vogan parametrization, cf. [Vo], there should be  a bijective correspondence
(depending on fixing a base point, 
consisting of a pair $(\psi,N)$ where $\psi$ is a 
non-degenerate character of $N=N(F)$ which is a maximal unipotent subgroup in 
a fixed quasi-split  group $G_0(F)$) between enhanced Langlands parameter $(\varphi, \mu)$
and pairs $(\pi,G(F))$ consisting of a pure innerform $G$ of $G_0$ over $F$, and an irreducible
admissible representation $\pi$ of $G(F)$.
Beginning with the work of Harris-Taylor-Henniart on the local Langlands correspondence for $\GL_n(F)$, 
the Langlands-Vogan parametrization is now supposed to be known via the work of Arthur, Moeglin, and others 
for all classical groups.

In this paper, we will have occasions to use the Jacobson-Morozov theorem according to which, if $G$ is a reductive algebraic group
over a field $F$ of characteristic zero and $u \in G(F)$ is a unipotent element, then there is a homomorphism 
$\phi: \SL_2 \rightarrow G$ 
defined over $F$ taking the upper triangular unipotent matrix $ \left ( \begin{array}{cc} 1 & 1 \\ 0 &  1\end{array}\right ) \in \SL_2(F)$ 
to $u$. Given a unipotent element $u \in G(F)$,
the homomorphism $\phi$ is unique up to conjugation by $Z_G(u)$, the centralizer of $u$.

If $u$ is a regular unipotent element of $G(F)$ (which exists if and only if  $G$ is a quasi-split group over $F$), then the centralizer
of $u$ is up to the center of $G$, a unipotent group of dimension equal to the semi-simple rank of $G(\bar{F})$ contained inside the unique
Borel subgroup of $G$ in which $u$ lies. It follows that if 
$\phi: \SL_2 \rightarrow G$ corresponds to a regular unipotent element in $G(F)$, 
and takes the group of diagonal matrices
in $\SL_2$ inside a fixed maximal torus $T$ of $G$, then such a homomorphism is unique, and hence defined over $F$.
Therefore if $G$ is a quasi-split group over $F$, $(G,{B}, T, \{X_\alpha\})$   
is a pinning on $G$ defined over $F$, there is a 
unique homomorphism $\phi: \SL_2 \rightarrow G$ 
defined over $F$ corresponding to the regular nilpotent element $\sum X_\alpha$ in the Lie algebra of $G(F)$,
taking the group of diagonal matrices
in $\SL_2$ to $T$.

\begin{remark}It may be noted that for a quasi-split group $G$ over any field $F$, if $B=TN$ is a Borel subgroup of $G$ with 
unipotent radical $N$, and if we understand by an {\it algebraic Whittaker datum} a morphism of algebraic 
groups $\psi: N(F) \rightarrow F$ which restricted to 
any simple root space in $N(\bar{F})$ is nontrivial, then there is 
a bijective correspondence between algebraic Whittaker datum on $G$, and $N(F)$-conjugacy classes of 
pinnings $(G,{B}, T, \{X_\alpha\})$   on $G$ with $B$ the normalizer of $N$ in $G$, by sending an algebraic Whittaker datum
$\psi$ to the collection of elements $X_\alpha$  in the $\alpha$-root spaces in $N(\bar{F})$ 
with $\psi(X_\alpha) = 1$
for all simple roots $\alpha$. 
For defining
a homomorphism  $\psi: N(F)\rightarrow F$ to be non-degenerate,
there is no need to fix $T$. This is so because $\psi$ is defined on 
$N(\bar{F})/[N(\bar{F}),N(\bar{F})]$ which is a module for $B/N\cong T$ (`canonical' Cartan), hence decomposes 
as a sum of simple root spaces for $B/N \cong T$.

For $F$ a local field, 
fixing a nontrivial character $\psi_0:F\rightarrow \C^\times$ gives a bijective correspondence between 
 algebraic Whittaker datum and Whittaker datum on a reductive group $G$ over $F$.

\end{remark}  

\section{Constructing a duality involution}

The book [MVW] of Moeglin-Vigneras-Waldspurger constructs an involution $\iota_G$ for all classical groups $G(F)$ (defined without using
division algebras) which has the property that $\iota_G(\pi) \cong \pi^\vee$ for $\pi$ any irreducible admissible representation of the group $G(F)$. This
involution plays an important role in many results on classical groups, and has often been called the MVW involution. 
The most well-known of these involutions is for $\GL_n(F)$ in which case it can be taken to be $g\rightarrow {}^tg^{-1}$. 
It is a theorem of Gelfand-Kazhdan that this involution takes an irreducible admissible  representation of $\GL_n(F)$ 
to its contragredient. For $\Sp_{2n}(F)$, the involution is the conjugation by an element of $\GSp_{2n}(F)$ with similitude $-1$. Eventually, the
MVW theorem is proved by a group theoretic assertion on classical groups that $\iota_G(g)$ is conjugate to $g^{-1}$ for all  $
g \in G(F)$.

The aim of this section is to construct an involutive automorphism $\iota_G$ of  $G(F)$ for any quasi-split group $G$ 
over {\it any} field $F$ which coincides with the MVW involution for classical groups which we hope has the property that it takes
an irreducible admissible representation $\pi$ of $G(F)$, $F$ a local field,  to its contragredient (for certain groups $G$ and certain representations $\pi$ of $G(F)$ to be made precise later).

At the very least, for certain quasi-split reductive groups $G$, 
we will construct an involution $\iota_G$ on $G(F)$ which is well-defined as an element of
$\Aut(G)(F)/G(F)$ 
where $\Aut(G)(F)$ is the group of automorphisms of $G$ defined over $F$, and by abuse of notation,
we denote here and elsewhere in the paper $\Aut(G)(F)/G(F)$ to be the quotient group of $\Aut(G)(F)$ by the group
of inner automorphisms of $G$ coming from elements of $G(F)$. (Abuse of notation because the latter group is not $G(F)$ 
but is $G(F)/Z(F)$.) 
Since the involution is being constructed to take a representation to its contragredient, its being well-defined as an element in $\Aut(G)(F)/G(F)
$ is all that matters.

The involution $\iota_G$ will be defined using certain auxiliary data and later we will check that it is well-defined as 
an element of $\Aut(G)(F)/G(F)$ 
in certain cases; it is only for such groups $G$ for which $\iota_G$ is well-defined as an
element of $\Aut(G)(F)/G(F)$ that $\iota_G$ plays a particularly useful role, and it is only in such cases that 
we will use it in this paper.

First recall that for any reductive algebraic group $G$ over an algebraically closed field $F=\bar{F}$, with $T$ as a maximal torus,
a Chevalley involution is an involutive automorphism of $G$ preserving $T$ and acting as $t\rightarrow t^{-1}$ on $T$. 
Existence of a Chevalley 
involution is a well-known theorem. A Chevalley involution takes any irreducible
algebraic representation of $G$ to its contragredient.  Having fixed $T$, a Chevalley involution 
considered as an element in $\Aut(G)$ is unique up to multiplication  
by an inner automorphism of $G$ by an element of $T$.
We now fix a Borel subgroup $B$ containing $T$, and will
find it more convenient to multiply a Chevalley involution by an automorphism of $G$
 induced by conjugation by $\omega_G$,
where $\omega_G$ is a lift in $G$ (in fact in $N(T)$, the normalizer of $T$ in $G$) 
of the longest element in the Weyl group of $T$ which takes $B$ to the opposite Borel $B^-$. The resulting automorphism of $G$ 
will act on $T$ as $t\rightarrow w_G(t^{-1})$,  will preserve $B$, and by further 
multiplication by an automorphism of $G$ induced by conjugation by an element of $T$, 
we can assume that the automorphism 
fixes a given pinning $\Pa= (G,{B}, T, \{X_\alpha\})$ of $G$, 
constructing  a uniquely defined  involutive automorphism $c_{G,\Pa}$ 
(perhaps trivial) of $G$, to be called the Chevalley involution associated to the pinning $\Pa= (G,{B}, T, \{X_\alpha\})$ of $G$. 
Since any two pinnings on $G$ are conjugate under $G(\bar F)$, this involutive automorphism is uniquely defined up to
conjugation by an element of $G(\bar F)$.

We will now define Chevalley involution for any quasi-split reductive group over a general field $F$ which comes equipped 
with a pinning  $(G,{B}, T, \{X_\alpha\})$.

We begin with a split group $G_0$ now over a general field $F$ with a pinning  $(G_0,{B_0}, T_0, \{X_\alpha\})$ over $F$.
It is known that $$\Aut(G_0,{B_0}, T_0, \{X_\alpha\})(F) = \Aut(G_0(\bar{F}),{B_0}(\bar F), T_0(\bar F), \{X_\alpha\}) 
= \Out (G_0(\bar{F})),$$
where $\Out (G_0(\bar{F})),$ is defined by the exact sequence,
$$1 \rightarrow {\rm Int}({ G_0})(\bar{F})  \rightarrow {\Aut}({ G_0})(\bar{F}) 
\rightarrow {\Out}( G_0(\bar{F})) 
\rightarrow 1.$$

A quasi-split group over $F$ is described by a (conjugacy class of) homomorphism 
$$\lambda: \Gal(\bar{F}/F) \rightarrow \Aut(G_0({F}),{B_0}( F), T_0( F), \{X_\alpha\}) = \Out (G_0(\bar{F})),$$
allowing one to twist $G_0$ by the cocycle $\lambda \in H^1(\Gal(\bar{F}/F),\Aut(G_0))$
to construct a group $G_\lambda$ over $F$. The twisted group $G_\lambda$ continues to have $B_0(\bar{F})$ and $T_0(\bar{F})$ Galois invariant, and hence define subgroups $B_\lambda$ and $T_\lambda$ of $G_\lambda$ over $F$; further, the set 
of roots $\{X_\alpha\}$ is Galois invariant 
(although not pointwise).

The automorphism group of a quasi-split reductive group $G_\lambda$ over $F$  fixing the pinning
$(G_\lambda,{B_\lambda}, T_\lambda, \{X_\alpha\})$ 
are the elements of $\Aut(G_0(\bar{F}),{B_0}(\bar F), T_0(\bar F), \{X_\alpha\}) 
= \Out (G_0(\bar{F}))$ 
which commute with the image of $\lambda$. The Chevalley involution $c_{G_0,\Pa}$ belongs to the center of 
$\Aut(G_0(\bar{F}),{B_0}(\bar F), T_0(\bar F), \{X_\alpha\})$, hence induces an involution of $G_\lambda$ defined over $F$.
 
This allows one to introduce an automorphism $c_{G,\Pa}$ 
of $G = G_\lambda$ defined over $F$, 
 fixing a pinning $\Pa= (G,{B}, T, \{X_\alpha\})$ 
and which acts as $t\rightarrow w_G(t^{-1})$ on $T$ where $w_G$ is the longest element in the Weyl group of $T$ taking ${B}$ to ${B}^-$, 
the opposite Borel subgroup. We call $c_{G,\Pa}$, the Chevalley involution associated to a 
pinning $\Pa$ on $G$. Observe that $c_{G, \Pa}=1$ if and only if $-1 \in W_G$, thus in all simple groups excepts
those of type $A_n, D_{2n+1}, E_6$, $c_{G, \Pa}=1$.

\begin{defn} (Duality involution) For a quasi-split reductive algebraic group $G$ over a general field $F$, fix a pinning 
${\mathcal P}= ( G, B, T, \{X_\alpha\}) $ on $G$, and let $c_{G,\Pa}$ be the Chevalley involution constructed above. 
Define an automorphism $\iota_{G,\Pa} \in \Aut(G)(F)$ 
-- to be called the duality involution -- as a product of two commuting automorphisms $\iota_{G,\Pa}=\iota_{-}c_{G,\Pa}$ 
where $\iota_{-} \in T^{\ad}(F)$
is the unique element which  acts by $-1$ on all simple root spaces of $T$ in $B$ which is considered as an element of $\Aut(G)(F)$ by the inner conjugation action. 
\end{defn} 
  
\begin{example} We leave the pleasant  task of identifying $\iota_{G,\Pa}=\iota_{-}c_{G,\Pa}$ with the MVW involution for any of the classical quasi-split groups $G= \Sp_{2n}, \SO_{2n}, \U_n$ to the reader except to point out that for $\Sp_{2n}$, $c_{G,\Pa}=1$;
for $\SO_{2n}$, $\iota_{-} \in T(F)$, and $c_{G,\Pa}=1$ if $n$ is even; for $\U_n$, $\iota_{-} \in T(F)$ if $n$ is odd. 
\end{example}

Since any two pinnings $\Pa,\Pa'$ on $G$ are conjugate under $G^{\ad}(F) = (G/Z)(F)$, the corresponding duality involutions,  $\iota_{G,\Pa}$ 
and $\iota_{G,\Pa'}$ are related as: $\iota_{G,\Pa'} = g \iota_{G,\Pa} g^{-1}$ where $g \in G^{\ad}(F)$ 
is to be treated as an element of $\Aut (G)(F)$.
In more detail, $$\iota_{G,\Pa'}(x) = g \iota_{G,\Pa}(g^{-1}xg)g^{-1} = g \iota_{G,\Pa}(g^{-1}) \iota_{G,\Pa}(x) 
\iota_{G,\Pa}(g)g^{-1}, \quad \forall x \in G(\bar{F}). \,\,\, \hspace{.5cm}{(*)}$$ 

Since $G(F)$ operates transitively on pairs $(B,T)$, and for a given $(B,T)$, $T^{\ad}(F)$ operates transitively 
on the choice of pinnings $\{X_\alpha \}$,
it follows that $G^{\ad}(F)=G(F)T^{\ad}(F)$; we record this in the following well-known lemma.

\begin{lemma} \label{springer}
For a quasi-split reductive group $G$ over a field $F$ with center $Z$, if $B$ is a Borel subgroup of $G$, and $T$ a maximal torus of $B$, then for $G^{\ad}=G/Z$, and $T^{\ad}=T/Z$,
we have an isomorphism of groups:
$$T^{\ad}(F)/T(F) \cong G^{\ad}(F)/G(F),$$
equivalently, $G^{\ad}(F) = T^{\ad}(F)G(F)$.
\end{lemma}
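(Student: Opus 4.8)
The plan is to establish the set-theoretic identity $G^{\ad}(F)=T^{\ad}(F)\,G(F)$ (the ``equivalently'' clause), together with the complementary relation $T^{\ad}(F)\cap G(F)=T(F)$; the claimed isomorphism of groups then drops out of the second isomorphism theorem applied to the inclusion $T^{\ad}(F)\hookrightarrow G^{\ad}(F)$. Throughout, write $p\colon G\to G^{\ad}=G/Z$ for the quotient map; as in the statement, ``$G(F)$'' inside $G^{\ad}(F)$ abbreviates the subgroup $p(G(F))$, and ``$T(F)$'' inside $T^{\ad}(F)$ abbreviates $p(T(F))$.

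First I would dispose of the relation $T^{\ad}(F)\cap G(F)=T(F)$, which gives injectivity of the natural map $T^{\ad}(F)/T(F)\to G^{\ad}(F)/G(F)$; this is elementary. If $x=p(g)$ with $g\in G(F)$ happens to lie in $T^{\ad}(F)$, then $g$ lies in $p^{-1}(T^{\ad})=T$ (here we use $Z\subseteq T$), hence $g\in T(\bar F)\cap G(F)=T(F)$ and $x\in p(T(F))$; the reverse inclusion is obvious.

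The substance is $G^{\ad}(F)=T^{\ad}(F)\,G(F)$. Let $\bar g\in G^{\ad}(F)$; it acts on $G$ as an automorphism defined over $F$, namely conjugation by some $\tilde g\in G(\bar F)$, so the images $\tilde g B\tilde g^{-1}$ and $\tilde g T\tilde g^{-1}$ of $B$ and $T$ are again a Borel subgroup and a maximal torus of $G$ defined over $F$. Since $G$ is quasi-split, $G(F)$ acts transitively on such pairs (an $F$-rational Borel together with an $F$-rational maximal torus of it): this is the Borel--Tits conjugacy theorem for minimal parabolic $F$-subgroups --- which for quasi-split $G$ are precisely the $F$-rational Borel subgroups --- combined with the transitivity of $R_u(B)(F)$ on the $F$-rational maximal tori of $B$. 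So pick $g_1\in G(F)$ with $g_1Bg_1^{-1}=\tilde gB\tilde g^{-1}$ and $g_1Tg_1^{-1}=\tilde gT\tilde g^{-1}$, and set $h:=p(g_1)^{-1}\bar g\in G^{\ad}(F)$. Then $h$ normalizes both $B$ and $T$; since $N_G(B)=B$ and $N_B(T)=T$ (the torus $T$ has no nonzero fixed point on $R_u(B)$, so no nontrivial unipotent element of $B$ normalizes $T$), a lift of $h$ to $G(\bar F)$ lies in $T$, whence $h\in p(T)=T^{\ad}$ and, being $F$-rational, $h\in T^{\ad}(F)$. Therefore $\bar g=p(g_1)\,h\in G(F)\,T^{\ad}(F)$, as desired.

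I do not expect a genuine obstacle here. The one place where the hypothesis really intervenes is the transitivity of $G(F)$ on $(B,T)$-pairs, which rests on quasi-splitness (so that minimal parabolic $F$-subgroups are Borel subgroups, hence all $G(F)$-conjugate by Borel--Tits); everything else is formal. If one prefers, the self-normalizer step can be replaced by a pinning argument: once $h$ fixes $(B,T)$, the transitivity of $T^{\ad}(F)$ on the $F$-rational pinnings with that $(B,T)$ lets one further normalize $h$ so that it fixes an entire pinning $\Pa$, and then $h=1$ because an inner automorphism of $G$ lying in $G^{\ad}(F)$ and fixing a pinning is trivial, as $\mathrm{Int}(G)\cap\Aut(G,\Pa)=1$.
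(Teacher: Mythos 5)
Your proof is correct and follows essentially the same route as the paper's one-line justification preceding the lemma: both reduce the decomposition $G^{\ad}(F)=T^{\ad}(F)\,G(F)$ to the transitivity of $G(F)$ on $F$-rational pairs $(B,T)$ (which is exactly where quasi-splitness enters), and then identify the stabilizer of $(B,T)$ inside $G^{\ad}(F)$ as $T^{\ad}(F)$. You carry out the stabilizer step directly via $N_G(B)=B$ and $N_B(T)=T$ rather than the paper's appeal to the transitive $T^{\ad}(F)$-action on pinnings for a fixed $(B,T)$, and you make explicit the intersection $T^{\ad}(F)\cap G(F)=T(F)$ needed for the isomorphism clause; both are correct, minor sharpenings of the same argument.
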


Write $g \in G^{\ad}(F)$ as $g = g_0t$ with $g_0 \in G(F)$, and $T\in T^{\ad}(F)$. Then,
$ g \iota_{G,\Pa}(g^{-1}) = g_0 t \iota_{G,\Pa}(t^{-1}) 
\iota_{G,\Pa}(g_0^{-1})$. Since the action of $\iota_{G,\Pa}$ on $T$ is given by $t \rightarrow 
w_G(t^{-1})$, 
$t \iota_{G,\Pa}(t^{-1}) = t w_G(t)$. Therefore if we can prove that $tw_G(t) \in T(F)$ for all $t \in T^{\ad}(F)$, 
it would follow from $(*)$ that for different choices of $\Pa$, the automorphisms $\iota_{G,\Pa}$ differ by $G(F)$.
In fact, the  automorphisms $\iota_{G,\Pa}$, as we vary the pinning $\Pa$, differ by $G(F)$ if and only if $
tw_G(t) \in T(F)$ for all $t \in T^{\ad}(F)$.

From 
the short exact sequence of commutative algebraic groups,  
$1 \rightarrow Z \rightarrow T \rightarrow T^{\ad} \rightarrow 1$,
we get the long exact sequence of Galois cohomology groups:
$$1 \rightarrow { T^{\ad}(F)/T(F)}  \rightarrow H^1(F,Z) 
\rightarrow H^1(F,T). $$
It follows that $tw_G(t) \in T(F)$ for all $t \in T^{\ad}(F)$ if and only if  
the automorphism of $H^1(F,Z)$ arising from the automorphism $z\rightarrow w_G(z^{-1})$ of $Z$ is 
trivial on ${\rm ker} \{H^1(F,Z) \rightarrow H^1(F,T)\}$. Since 
the action of $w_G$ on $Z$ is trivial, the automorphism $z\rightarrow w_G(z^{-1})$ of $Z$ is nothing but
$z \rightarrow z^{-1}$. Hence the action of the automorphism $z\rightarrow w_G(z^{-1})$ on $H^1(F,Z)$ is nothing
but $a\rightarrow -a$ (using additive notation now).

We record this conclusion as a proposition.

\begin{proposition}\label{prop2}
For $G$ a quasi-split group over a field $F$, the automorphism $\iota_{G,\Pa}$ constructed using a pinning $\Pa$ on $G$ is independent of the pinning
chosen (as an element of $\Aut(G)(F)/G(F)$) if ond only if 
$2{\rm ker}\left \{H^1(F,Z) \rightarrow H^1(F,T) \right \} =0$, in particular if 
 $2H^1(F,Z(G))=0$.
\end{proposition}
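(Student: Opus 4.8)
The plan is to observe that almost all the work has already been carried out in the discussion immediately preceding the statement, so the task is really one of assembly. Fixing two pinnings $\Pa,\Pa'$ on $G$, I would start from the fact that they are $G^{\ad}(F)$-conjugate, say $\Pa' = g\cdot\Pa$ with $g\in G^{\ad}(F)$, together with the transformation formula $(*)$, which exhibits $\iota_{G,\Pa'} = {\rm Int}\bigl(g\,\iota_{G,\Pa}(g^{-1})\bigr)\circ\iota_{G,\Pa}$ inside $\Aut(G)(F)$. Since the inner automorphisms of $G$ arising from $G(F)$ form a normal subgroup of $\Aut(G)(F)$ (conjugating ${\rm Int}(g_0)$ by an $F$-rational $\sigma$ gives ${\rm Int}(\sigma(g_0))$ with $\sigma(g_0)\in G(F)$), and since $G^{\ad}$ has trivial center so that $G^{\ad}(F)\hookrightarrow\Aut(G)(F)$, the involutions $\iota_{G,\Pa}$ and $\iota_{G,\Pa'}$ coincide in $\Aut(G)(F)/G(F)$ precisely when $g\,\iota_{G,\Pa}(g^{-1})$ lies in the image of $G(F)\to G^{\ad}(F)$. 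As $g$ ranges over all of $G^{\ad}(F)$ when the pair of pinnings varies, independence of the pinning is equivalent to the condition that $g\,\iota_{G,\Pa}(g^{-1})$ lie in the image of $G(F)$ for every $g\in G^{\ad}(F)$.

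Next I would cut this down to a torus statement. By Lemma~\ref{springer} write $g = g_0 t$ with $g_0\in G(F)$ and $t\in T^{\ad}(F)$; since $\iota_{G,\Pa}$ is $F$-rational, $\iota_{G,\Pa}(g_0^{-1})\in G(F)$, so $g\,\iota_{G,\Pa}(g^{-1}) = g_0\cdot\bigl(t\,\iota_{G,\Pa}(t^{-1})\bigr)\cdot\iota_{G,\Pa}(g_0^{-1})$ lies in the image of $G(F)$ if and only if $t\,\iota_{G,\Pa}(t^{-1})$ does; and as $t\,\iota_{G,\Pa}(t^{-1})\in T^{\ad}(F)$ while $T^{\ad}(F)\cap\operatorname{image}(G(F)) = \operatorname{image}(T(F))$, again by Lemma~\ref{springer}, the condition becomes $t\,\iota_{G,\Pa}(t^{-1})\in\operatorname{image}(T(F))$ for every $t\in T^{\ad}(F)$. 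Using that $\iota_{G,\Pa}$ acts on $T$ by $t\mapsto w_G(t^{-1})$, this rewrites as $t\,w_G(t)\in\operatorname{image}(T(F))$, where $w_G$ is the longest element of $W_G$; I would note in passing that $w_G$ acts $F$-rationally on $T$ because $B$ is defined over $F$, so the Galois action on $W_G$ preserves the simple reflections and hence fixes $w_G$.

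Finally I would feed the map $t\mapsto t\,w_G(t)$ into Galois cohomology. The short exact sequence $1\to Z\to T\to T^{\ad}\to 1$ gives a connecting homomorphism $\delta\colon T^{\ad}(F)\to H^1(F,Z)$ with $\ker\delta = \operatorname{image}(T(F))$ and $\operatorname{image}(\delta) = \ker\{H^1(F,Z)\to H^1(F,T)\}$. The automorphism $w_G$ acts on the whole sequence and trivially on $Z$ (it centralizes the center), hence trivially on $H^1(F,Z)$ and compatibly with $\delta$; thus $\delta(t\,w_G(t)) = \delta(t) + w_G(\delta(t)) = 2\,\delta(t)$. Therefore $t\,w_G(t)\in\operatorname{image}(T(F))$ for all $t$ if and only if $2\,\delta(t)=0$ for all $t\in T^{\ad}(F)$, which — since $\delta$ surjects onto $\ker\{H^1(F,Z)\to H^1(F,T)\}$ — is exactly $2\,\ker\{H^1(F,Z)\to H^1(F,T)\}=0$; and $2H^1(F,Z(G))=0$ evidently forces this, giving the ``in particular'' clause. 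I do not expect any genuine obstacle: the only places demanding care are the bookkeeping in passing to $\Aut(G)(F)/G(F)$ (confirming that conjugation by $G(F)$ is precisely the normal subgroup one divides out, and keeping one- versus two-sided cosets straight) and the $F$-rationality of $w_G$ on $T$ needed for the $\delta$-computation to be legitimate, neither of which is serious.
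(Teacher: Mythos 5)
Your proof is correct and follows essentially the same route as the paper: reduce via $(*)$ and Lemma~\ref{springer} to the condition $t\,w_G(t)\in\operatorname{image}(T(F))$ for $t\in T^{\ad}(F)$, then translate through the connecting map $\delta\colon T^{\ad}(F)\to H^1(F,Z)$ with image $\ker\{H^1(F,Z)\to H^1(F,T)\}$. The only cosmetic difference is in the last step: the paper computes the action of $\iota_{G,\Pa}$ on $Z$ directly (namely $z\mapsto w_G(z^{-1})=z^{-1}$, hence $a\mapsto -a$ on $H^1(F,Z)$) and asks when this is trivial on the kernel, whereas you split out the $w_G$-equivariance of $\delta$ and the triviality of $w_G$ on $H^1(F,Z)$ to get $\delta(t\,w_G(t))=2\delta(t)$ outright; the two bookkeepings coincide.
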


\begin{corollary} If $F=\R$, or $Z=Z(G)$ is an elementary abelian 2-group, or 
$Z=Z(G)$ is a split torus, or more generally an induced torus,  
then the duality involution 
$\iota_{G,\Pa}$ (as an element of $\Aut(G)(F)/G(F)$) 
is independent 
of the pinning $\Pa$.
\end{corollary}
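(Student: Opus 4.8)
The plan is to deduce the corollary directly from Proposition~\ref{prop2}. Since
\[
2\,{\rm ker}\left\{H^1(F,Z) \rightarrow H^1(F,T)\right\} \subseteq 2H^1(F,Z),
\]
it suffices to verify that $2H^1(F,Z)=0$ in each of the four listed situations. So the whole argument reduces to a short case-by-case computation in Galois cohomology, with nothing new required on the group-theoretic side; all the real content already sits in Proposition~\ref{prop2} and the exact sequence preceding it.

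First I would treat $F=\R$. Here $\Gal(\bar F/F)=\Gal(\C/\R)$ is cyclic of order $2$, and for any finite group $\Gamma$ the cohomology groups $H^i(\Gamma,M)$ with $i\ge 1$ are annihilated by $|\Gamma|$ (via $\mathrm{cor}\circ\mathrm{res}$); applied with $\Gamma$ of order $2$ and $M=Z(\bar F)$ this gives $2H^1(\R,Z)=0$. Next, if $Z$ is an elementary abelian $2$-group — whether one reads this as the constant group scheme $(\Z/2)^n$ or as $\mu_2^n$ — multiplication by $2$ is the zero endomorphism of $Z$, hence induces the zero map on $H^1(F,Z)$, so $2H^1(F,Z)=0$ once more. (If $Z$ is not smooth one should interpret $H^1(F,Z)$ as fppf cohomology, but the multiplication-by-$2$ argument is insensitive to this choice.)

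Then I would dispose of the torus cases, where one in fact obtains the stronger vanishing $H^1(F,Z)=0$. A split torus is $\Gm^n$, and $H^1(F,\Gm)=0$ is Hilbert's Theorem~90, so $H^1(F,\Gm^n)=0$. More generally an induced torus is by definition a finite product of Weil restrictions $\mathrm{Res}_{E/F}\Gm$ along finite separable extensions $E/F$ (the split case being $E=F$), and Shapiro's lemma together with Hilbert~90 gives $H^1(F,\mathrm{Res}_{E/F}\Gm)\cong H^1(E,\Gm)=0$; hence $H^1(F,Z)=0$ for $Z$ induced. In all four cases Proposition~\ref{prop2} then applies and yields that $\iota_{G,\Pa}$ is independent of the pinning $\Pa$ as an element of $\Aut(G)(F)/G(F)$.

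The main point is that there is essentially no obstacle: the corollary is just the observation that each of the four named classes of central groups satisfies the hypothesis $2H^1(F,Z)=0$ of Proposition~\ref{prop2}. The only step warranting a moment's care is the interpretation of $H^1(F,Z)$ when $Z$ is non-smooth, as noted above; otherwise the proof is a direct application of Hilbert~90, Shapiro's lemma, and the standard annihilation of higher cohomology of a finite group by its order.
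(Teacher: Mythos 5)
Your proof is correct and is exactly the natural elaboration of what the paper leaves implicit (the corollary is stated with no proof, as an immediate consequence of Proposition~\ref{prop2}). You correctly reduce to checking $2H^1(F,Z)=0$ in each case and the four verifications---$\Z/2$-cohomology is $2$-torsion for $F=\R$, multiplication by $2$ vanishes on an elementary abelian $2$-group, and Hilbert~90 (with Shapiro for the induced case) gives $H^1(F,Z)=0$ for split or induced tori---are all standard and correct.
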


\begin{example} The condition $2H^1(F,Z(G))=0$ in Proposition \ref{prop2} is satisfied for all classical 
groups $G=\GL_n, \U_n, \SO_n, \Sp_{2n}$. On the other hand, it is not satisfied for $\SL_n$, and 
 as we discuss in some detail in Example \ref{example1}, there is no good duality involution on representations
of $\SL_n(F)$.
\end{example}

Using Remark 1 relating pinnings to algebraic Whittaker datum, the involution $\iota_{G,\Pa}$ can also be constructed
using algebraic Whittaker datum. We quickly recall this.

Let $\psi: N(F) \rightarrow F$ be 
an algebraic Whittaker datum.
The group $G$ has an automorphism $\iota_{B,T,N,\psi} $ 
defined over $F$ 
which takes the pair $(T,B)$ to itself, $\psi$ to $\psi^{-1}$, and  operates on $T$ as $t\rightarrow w_G(t^{-1})$;
its effect on simple roots of $T$ on 
$N(\bar{F})$ is that of the automorphism $-w_G$ where $w_G$ is the longest element in the Weyl group
of $G$ over $\bar{F}$, i.e., $\iota_{B,T,N,\psi}(\alpha) = -w_G(\alpha)$ for all simple roots of $T$ inside $N(\bar{F})$.
The automorphism $\iota_{B,T,N,\psi} $ can be taken to be  $\iota_{G,\Pa}$ for the pinning $\Pa$ associated to the 
algebraic Whittaker datum
by Remark 1.

\begin{conj} \label{conj}
For $G$ a quasi-split group over a local field $F$, let $\iota_G$ be the
duality automorphism of $G$ constructed above which  we assume is independent of the pinning used to define it 
as an element of $\Aut(G)(F)/G(F)$ (thus imposing conditions on $G$ such as in proposition \ref{prop2}). Let $\pi$ be an irreducible admissible generic representation of $G(F)$ (for any Whittaker datum),
then $\pi^\vee \cong \iota_G(\pi)$. 
For $\pi$ any irreducible admissible representation of $G(F)$, if the Langlands parameter $\varphi$ associated to 
$\pi$ has the property that  $\pi_0(Z_{\varphi})$ 
is an elementary abelian 2-group (or its irreducible representations are selfdual), $\pi^\vee \cong \iota_G(\pi)$. 
\end{conj}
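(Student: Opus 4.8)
\medskip
\noindent\textbf{A proof strategy.} The plan is to route the whole statement through the Langlands--Vogan parametrization and reduce it to one equality of enhanced parameters, using the parametrization (a theorem for classical groups by work of Arthur, Moeglin and others), its equivariance under pinning-preserving automorphisms of $G$, the normalization that the $\psi$-generic member of an $L$-packet corresponds to the trivial character of $\pi_0(Z_\varphi)$, and the elementary fact that $\pi$ is $\psi$-generic if and only if $\pi^\vee$ is $\psi^{-1}$-generic. The remaining ingredient --- unconditional for classical groups and the content of the paper's companion conjecture describing the contragredient in terms of parameters --- is that if $\pi$ corresponds to $(\varphi,\mu)$ for the Whittaker datum $\psi$, then $\pi^\vee$ corresponds to $(c_{\LG}\circ\varphi,\mu^\vee)$ for the Whittaker datum $\psi^{-1}$, where $c_{\LG}$ is the Chevalley involution of $\LG$ and $\mu^\vee$ is the contragredient character under the identification $\pi_0(Z_\varphi)\cong\pi_0(Z_{c_{\LG}\varphi})$ induced by $c_{\LG}$.

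First I would identify the automorphism of $\LG$ dual to $\iota_{G,\Pa}=\iota_-\,c_{G,\Pa}$. The factor $c_{G,\Pa}$ fixes the pinning and acts on $X^*(T)$ by $-w_G$; under the interchange of the based root datum of $G$ with that of $\wG$ and the canonical identification of their Weyl groups, the based-root-datum automorphism $-w_G$ of $G$ goes to $-w_{\wG}$ on $X^*(\wT)$, so $c_{G,\Pa}$ dualizes to the Chevalley involution of $\wG$; being central in the group of pinning-preserving automorphisms it is defined over $F$ and extends to an automorphism $c_{\LG}$ of $\LG$. The factor $\iota_-\in T^{\ad}(F)$ is inner, hence trivial in $\Out(G)$ and does not change the equivalence class of a parameter; on the other hand conjugation by $\iota_-$, which scales each $X_\alpha$ by $-1$, carries $\psi$ to $\psi^{-1}$, while $c_{G,\Pa}$ fixes $\psi$. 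Writing $\sigma=c_{G,\Pa}(\pi)$, equivariance of the parametrization under automorphisms fixing $\psi$ gives that $\sigma$ corresponds to $(c_{\LG}\circ\varphi,\mu)$ for $\psi$ ($\mu$ transported along $c_{\LG}$ throughout), and then twisting $\sigma$ by $\iota_-$ while simultaneously moving the base point from $\psi$ to $\psi^{-1}$ leaves the parametrizing character unchanged; hence $\iota_{G,\Pa}(\pi)=\iota_-(\sigma)$ corresponds to $(c_{\LG}\circ\varphi,\mu)$ for the Whittaker datum $\psi^{-1}$.

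Comparing with the companion statement, $\iota_G(\pi)$ and $\pi^\vee$ have, for the \emph{same} Whittaker datum $\psi^{-1}$, the same parameter $c_{\LG}\circ\varphi$ and characters $\mu$, $\mu^\vee$ respectively, so $\iota_G(\pi)\cong\pi^\vee$ if and only if $\mu\cong\mu^\vee$. The two hypotheses are precisely what forces this. If $\pi$ is generic, then --- using that $\iota_G$ is pinning-independent, which is where Proposition~\ref{prop2} enters --- we may choose the pinning $\Pa$ so that its Whittaker datum $\psi$ is one for which $\pi$ is generic; then $\mu$ is trivial and $\mu\cong\mu^\vee$ is automatic. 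If instead every irreducible representation of $\pi_0(Z_\varphi)$ is selfdual --- in particular if $\pi_0(Z_\varphi)$ is elementary abelian $2$-torsion, so every character equals its inverse --- then $\mu\cong\mu^\vee$ for any choice of pinning (the ambiguity in $\mu$ between two pinnings is by such a selfdual character). For classical $G$ the argument is unconditional once one carries out the identification of $\iota_{G,\Pa}$ with the Moeglin--Vigneras--Waldspurger involution (the ``pleasant exercise'' left to the reader in the Example), after which the MVW theorem --- equivalently the Arthur--Moeglin description of the packets --- supplies the companion statement.

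The main obstacle is that companion statement in full generality: that the Langlands--Vogan parameter of $\pi^\vee$ is $(\varphi,\mu)\mapsto(c_{\LG}\circ\varphi,\mu^\vee)$ together with $\psi\mapsto\psi^{-1}$. This is itself conjectural for general quasi-split $G$ and presupposes the parametrization, so outside the classical case the argument is conditional; even the part needed for the generic assertion, that $\pi^\vee$ has parameter $c_{\LG}\circ\varphi$, is open in general, though it should follow from stable character identities relating an element to its inverse. A secondary, more technical point that I have glossed over is the careful check that the identification $\pi_0(Z_\varphi)\cong\pi_0(Z_{c_{\LG}\varphi})$ is used consistently on both sides and that twisting by $\iota_-$ introduces no character beyond the base-point flip $\psi\to\psi^{-1}$; this is routine but essential, and its failure for non-selfdual $\mu$ is exactly why $\iota_G$ cannot detect such characters --- the reason the parameter-theoretic description of the contragredient is strictly more powerful than the one via $\iota_G$.
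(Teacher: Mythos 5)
The statement you are arguing for is \textbf{Conjecture~1}, which the paper presents as a conjecture and does not prove; there is no proof in the paper to compare your proposal against. What you have written is not a proof but a \emph{conditional} reduction of Conjecture~1 to Conjecture~2, plus two further unproven ingredients: (i) equivariance of the enhanced Langlands--Vogan parametrization under pinning-preserving automorphisms of $G$ (so that $c_{G,\Pa}(\pi)$ has parameter $c_{\wG}\circ\varphi$ and character $c_{\wG}\circ\mu$ for the same base point), and (ii) the Gan--Gross--Prasad property $\mu(\pi^g)=\mu(\pi)\otimes\eta_g$ for $g\in G^{\ad}(F)/G(F)$. Both of these are expected but not known for general quasi-split $G$; the second is explicitly flagged as conjectural where the paper recalls it. Your bookkeeping with $\eta_{-1}$ does work out---since Conjecture~2 places the twist $\eta_{-1}$ on the dual side as well, it cancels in the comparison, leaving exactly the condition $\mu\cong\mu^\vee$---and the pinning-independence from Proposition~\ref{prop2} is used correctly to let you pick the base point so that $\mu$ is trivial in the generic case.

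That said, you should reconcile your claim with the author's own remark in the introduction that the two conjectural descriptions ``are related in some sense, although neither seems a consequence of the other.'' Your proposal shows that Conjecture~1 \emph{does} follow from Conjecture~2 once one further grants (i) and (ii); the author's caution is precisely that (i) and (ii) are independent conjectural inputs, so the deduction is not one conjecture implying the other within the paper's framework. In other words, what you have is a useful heuristic that makes the relationship between the two conjectures precise, and for classical groups---where Arthur--Moeglin supplies both the parametrization and its equivariance---it becomes an actual argument, consistent with the paper's observation that $\iota_G$ recovers the MVW involution there. As a proof of Conjecture~1 in the generality in which it is stated, however, the proposal is incomplete, and you have correctly identified why in your own closing paragraph.
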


We end this section with the following question.

\begin{question} If $G_0,G$ are reductive groups over a non-archimedean 
local field $F$ which are innerforms of each other,
it follows for example from Lemma 3.1 
of Kaletha's paper [Ka] that
$G_0^{\ad}(F)/G_0(F)$ is {\it naturally} isomorphic to $G^{\ad}(F)/G(F)$.
It is possible that if $G_0$ is a quasi-split group, then 
there is a natural injection of the `outer automorphism groups' $\Aut(G)(F)/G(F) \hookrightarrow \Aut(G_0)(F)/G_0(F)$,
with explicit knowledge of when there is an equality (perhaps when $G$ is obtained as a pure innerform of $G_0$, i.e., 
when $G$ is obtained by twisting with respect to a cohomology class in $H^1(F,G_0)$?). If the group $G$ has no outer automorphism over $\bar{F}$, 
then $\Aut(G)(F)=G^{\ad}(F)$, and  Lemma 3.1 of [Ka] mentioned in the beginning of this paragraph  
already answers the question.
If $\Aut(G)(F)/G(F) \cong \Aut(G_0)(F)/G_0(F)$, then the duality automorphism
constructed in this paper as an element of the group   $\Aut(G_0)(F)/G_0(F)$ can be transported to an element of 
$\Aut(G)(F)/G(F)$, and may play a role similar to what it does here, 
namely for any irreducible admissible representation $\pi$ of $G(F)$, if the Langlands parameter $\varphi$ associated to 
$\pi$ has the property that  $\pi_0(Z_{\varphi})$ 
is an elementary abelian 2-group (or its irreducible representations are selfdual), $\pi^\vee \cong \iota_G(\pi)$; [MVW] as well as the work of Adams [Ad] 
in the real case supports this expectation.   (Even for quasi-split groups we are not suggesting that 
$\iota_G(\pi) \cong \pi^\vee$, so neither will we do so for general groups.) 
\end{question}

\section{Conjecture on the contragredient}

We begin with the following basic lemma regarding the contragredient.

\begin{lemma} \label{Whit} Let $G(F)$ be a quasi-split reductive algebraic group over a local field $F$ of characteristic 0, with $B=TN$ a Borel subgroup of $G$,
and $\psi: N(F)\rightarrow \C^\times$ a non-degenerate character. Then if an irreducible admissible representation $\pi$ of $G(F)$
has a Whittaker model for the character $\psi$, then the contragredient $\pi^\vee$ has a Whittaker model for the character
$\bar{\psi}= \psi^{-1}: N \rightarrow \C^\times$.
\end{lemma}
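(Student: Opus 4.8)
The plan is to prove Lemma \ref{Whit} by unwinding the definition of the Whittaker model and using the standard duality between a representation and its contragredient together with the fact that the inverse-transpose-type symmetry of $N(F)$ sends a Whittaker functional for $\psi$ to one for $\psi^{-1}$. Recall that $\pi$ having a Whittaker model for $\psi$ means $\Hom_{N(F)}(\pi, \psi) \neq 0$, i.e. there is a nonzero linear functional $\ell: V_\pi \to \C$ with $\ell(\pi(n)v) = \psi(n)\ell(v)$ for all $n \in N(F)$, $v \in V_\pi$; equivalently $\pi$ embeds in $\Ind_{N(F)}^{G(F)} \psi$ (smooth induction in the $p$-adic case, the appropriate completed/smooth induction in the archimedean case). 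The goal is to produce such a functional for $\pi^\vee$ with $\psi$ replaced by $\psi^{-1}$.

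First I would use the pairing: the contragredient $\pi^\vee$ acts on the smooth dual $V_\pi^\vee$, with $\langle \pi^\vee(g)\lambda, v\rangle = \langle \lambda, \pi(g^{-1})v\rangle$. The naive attempt ``take the dual vector of a Whittaker vector'' does not immediately work because the Whittaker functional lives on $V_\pi$, not inside $V_\pi^\vee$; the honest route is to invoke the (well-known, due to Gelfand--Kazhdan in the $p$-adic case, and the analogous statement over $\R$) fact that for an irreducible admissible representation $\pi$, there is an automorphism-or-antiautomorphism $\tau$ of $G(F)$ (for instance $g \mapsto g^{-1}$ composed with an inner automorphism, or the MVW-type involution itself) such that $\pi \circ \tau \cong \pi^\vee$, and which one may arrange to preserve $(B,T,N)$ and act on $N$ in a way that pulls $\psi$ back to $\psi^{-1}$. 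Concretely: if $\theta$ is the transpose-inverse involution (or, more invariantly, the Chevalley-type involution composed with conjugation by $w_G$, exactly the ingredient $c_{G,\Pa}$ of the earlier section) then $\theta$ stabilizes $N$ and satisfies $\psi \circ \theta = \psi^{-1}$ on $N(F)$ for a suitable $\psi$; precomposing a $\psi$-Whittaker functional for $\pi$ with $\theta$ gives a $\psi^{-1}$-Whittaker functional for $\pi \circ \theta$. So the cleanest argument is: (i) realize $\pi^\vee \cong \pi \circ \theta$ for such a $\theta$; (ii) observe $\theta$ carries a $\psi$-genericity statement to a $\psi^{-1}$-genericity statement.

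Alternatively, and perhaps more self-containedly, I would argue directly via Kirillov/Whittaker models without citing the full duality theorem: let $\mathcal{W}(\pi,\psi)$ be the space of Whittaker functions $W: G(F) \to \C$ with $W(ng) = \psi(n)W(g)$; the map $W \mapsto \widetilde{W}$, $\widetilde{W}(g) := W(w_0 \,{}^{\iota}g^{-1})$ for an appropriate long-Weyl-element representative $w_0$ and anti-involution $g \mapsto {}^\iota g^{-1}$ chosen to normalize $N$ and flip $\psi$, intertwines the right action of $G(F)$ on $\mathcal{W}(\pi,\psi)$ with that on $\mathcal{W}(\pi^\vee, \psi^{-1})$; checking that $\widetilde{W}$ transforms on the left by $\psi^{-1}$ is the short computation $W(w_0\,{}^\iota(ng)^{-1}) = W(w_0 \,{}^\iota g^{-1}\, {}^\iota n^{-1}) = W((w_0 \,{}^\iota n^{-1} w_0^{-1})\, w_0\, {}^\iota g^{-1})$, and $w_0 \,{}^\iota n^{-1} w_0^{-1}$ lies in $N$ with the character evaluating to $\psi^{-1}(n)$ by the defining property of the long Weyl element on simple root spaces. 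Irreducibility and admissibility guarantee $\mathcal{W}(\pi^\vee,\psi^{-1})$ is (isomorphic to) the Whittaker model of $\pi^\vee$, giving the claim.

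I expect the main obstacle to be the archimedean case and the bookkeeping on characters: in the $p$-adic setting the Gelfand--Kazhdan machinery and uniqueness of Whittaker models make everything clean, but over $\R$ or $\C$ one must be careful about smooth versus moderate-growth models and about the precise normalization of the long Weyl element representative $w_0$ so that conjugation really sends $\psi$ to $\psi^{-1}$ rather than to $\psi$ twisted by some other sign character — this is exactly the point where the hypothesis that $\psi$ is \emph{non-degenerate} (nontrivial on every simple root space) is used, since $-w_G$ permutes the simple roots and the $-1$ on root spaces is what produces $\psi^{-1}$. Once the correct involution (the same $\theta$ underlying $c_{G,\Pa}$ from Section 3) is in hand, the verification that it flips $\psi$ and that $\pi \circ \theta \cong \pi^\vee$ for generic $\pi$ is routine; the only genuinely substantive input is the existence and uniqueness of Whittaker models, which is standard in characteristic $0$.
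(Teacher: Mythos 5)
Your proposal takes a genuinely different route from the paper, and as it stands it has a circularity that you should be aware of.

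Both of the arguments you sketch ultimately rest on the existence of an involution $\theta$ (or an anti-involution composed with $w_0$) of $G(F)$ with $\pi\circ\theta\cong\pi^\vee$, respecting $(B,T,N)$ and carrying $\psi$ to $\psi^{-1}$. But that is \emph{exactly} the MVW-type property that this paper is in the business of constructing (Section~3) and conjecturing (Conjecture~1). Gelfand--Kazhdan gives such a $\theta$ for $\GL_n$, and MVW give it for classical groups; for a general quasi-split $G$ it is not a ``well-known fact'' but an open conjecture, so Lemma~\ref{Whit} cannot be derived from it without begging the question. Your second, ``self-contained'' argument has the same hidden step: the map $W\mapsto\widetilde W$, $\widetilde W(g)=W(w_0\,{}^\iota g^{-1})$, does intertwine $\mathcal W(\pi,\psi)$ with $\mathcal W(\pi\circ\sigma,\psi^{-1})$ for the automorphism $\sigma$ built out of $w_0$ and $\iota$, but identifying $\pi\circ\sigma$ with $\pi^\vee$ is precisely the nontrivial duality claim you were trying to avoid. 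The transformation computation on $N$ is fine; what is missing is the proof that the resulting $G(F)$-module is the contragredient rather than an unrelated $\iota_G$-twist of $\pi$.

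The paper's proof sidesteps all of this by not invoking any such involution. It first observes that for \emph{unitary} $\pi$ (in particular tempered up to twist), $\pi^\vee\cong\bar\pi$, and applying complex conjugation to a $\psi$-Whittaker functional $\ell$ produces directly a $\bar\psi=\psi^{-1}$-Whittaker functional $\bar\ell$ on $\bar\pi=\pi^\vee$. It then reduces the general generic case to this one: by the standard module conjecture (Heiermann--Opdam in the $p$-adic case, Vogan over $\R$) a generic irreducible $\pi$ is a full irreducible induced representation $\Ind_P^G\mu$ with $\mu$ tempered-up-to-twist on a Levi $M$, and Rodier's theorem transports $\psi$-genericity of $\pi$ to $\psi_M$-genericity of $\mu$ \emph{compatibly with complex conjugation}: if $\psi_M$ is Rodier's descent of $\psi$, then $\bar\psi_M$ is the descent of $\bar\psi$. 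Together these two inputs give the lemma with no appeal to a duality involution on $G$. If you want to repair your argument, this reduction to the unitary case is the ingredient you need to add; the involution-based route is not available at this logical stage of the paper.
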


\begin{proof}Although this lemma must be a standard one, the author has not found a proof in the literature, so here is one.
Observe that if $\pi$ is unitary, then $\pi^\vee$ is nothing but the complex conjugate $\bar{\pi}$ of $\pi$, i.e., 
$\bar{\pi} = \pi \otimes_{\C} \C$ where $\C$ is considered as a $\C$ module using the complex conjugation action. By applying complex conjugation to a  linear form $\ell: 
\pi \rightarrow \C$ on which $N(F)$ operates by $\psi$, we see that $\bar{\ell}$ defines a $\bar{\psi} = \psi^{-1}$-linear form on $\bar{\pi} = \pi^\vee$, proving the lemma for unitary representations, in particular for representations of any reductive group which are tempered up to a twist. 
The general case of the lemma follows by realizing a generic representation of $G(F)$ as an irreducible  principal 
series representation 
$\pi = {\rm Ind}_{P(F)}^{G(F)}\mu$ from a  representation $\mu$ which is tempered up to a twist of a Levi subgroup $M(F)$ of $P(F)$; that this
can be done is what's called the {\it standard module conjecture}, for a proof of which see
 corollary 1.2 in [HO] for $p$-adic fields, and [Vo2] for $F=\R$. (It is in this appeal to 
 [HO] that we use the characteristic zero hypothesis in the statement of the lemma.)

Now note that if $N_M$ is the unipotent radical of a Borel subgroup in $M$,
then there is a general recipe due to Rodier (recalled in the 4th paragraph of section \ref{Rodier}), 
 constructing a character $\psi_M:N_M(F)\rightarrow \C^\times$ from a  character $\psi: N(F)\rightarrow \C^\times$ with the property that 
  the induced principal series ${\rm Ind}_{P(F)}^{G(F)}\mu$
is $\psi$-generic if and only if  $\mu$ is $\psi_M$-generic.  The recipe of Rodier has the property that if $\psi_M:N_M(F)\rightarrow \C^\times$ is 
associated to the  character $\psi: N(F)\rightarrow \C^\times$ then
$\bar{\psi}_M:N_M(F)\rightarrow \C^\times$ is associated to  $\bar{\psi}: N(F)\rightarrow \C^\times$.
This proves the lemma.
\end{proof}

We next recall that in [GGP], section 9, denoting $G^{\rm ad}$ the adjoint group of $G$ 
(assumed without loss of generality at this point to be quasi-split since we want to construct 
something for the $L$-group),  there is constructed a homomorphism 
$  {G}^{\rm ad}(F)/G(F) \rightarrow 
 {\pi_0(Z_{\varphi})}^\vee$, denoted $g \mapsto \eta_g$ which has the following (conjectural) 
property on character of component groups associated to 
a representation $\pi$: $\mu(\pi^g) = \mu(\pi) \otimes \eta_g$.
We recall the map $g\rightarrow \eta_g$ here. For doing this, let 
$\widehat{G}^{\scon}
$ be the universal cover of $\widehat{G}$:
$$1 \rightarrow 
 \pi_1(\widehat{G}) 
\rightarrow \widehat{G}^{\scon} 
\rightarrow \widehat{G} \rightarrow 1 . $$
By the definition of universal cover, any automorphism of $\widehat{G}$ lifts to an automorphism of $\widehat{G}^{\scon}$ uniquely, 
and thus if we are given an action of $W_F$ on $\widehat{G}$ (through a parameter $\varphi: W_F\rightarrow {}^L{G}$),
 it lifts uniquely to an action of $W_F$ on $\widehat{G}^{\scon}$,
preserving  $ \pi_1(\widehat{G})$. Treating the above as an exact sequence of $W_F$-modules, and taking 
$W_F$-cohomology, we get the boundary map:
$$ \widehat{G}^{W_F} = Z_\varphi \rightarrow H^1(W_F,  \pi_1(\widehat{G})),$$ 
which gives rise to the map $$ \pi_0(Z_\varphi) \rightarrow H^1(W_F,  \pi_1(\widehat{G})),$$
and corresponds to the map $Z_\varphi \times W_F \rightarrow \pi_1(\wG)$ which is $(z,w)\rightarrow w(\tilde{z})\cdot \tilde{z}^{-1}$
where $\tilde{z}$ is an arbitrary lifts of an element  $ z \in Z_\varphi$ to $\wG^{\scon}$.

On the other hand by the Tate duality (using the identification $Z(G)^\vee = \pi_1(\wG)$), 
there is a perfect pairing:
$$H^1(W_F, Z(G)) 
\times H^1( W_F,  \pi_1(\widehat{G})) \rightarrow \Q/\Z.$$
This pairing 
allows one to  think of elements in $H^1(W_F, Z(G)) $ as characters on $H^1(W_F,\pi_1(\widehat{G}))$, and therefore 
using the map $ \pi_0(Z_\varphi) \rightarrow H^1(W_F,  \pi_1(\widehat{G})),$ as characters on $\pi_0(Z_\varphi)$.
Finally, given the natural map $G^{\ad}(F)/G(F) \rightarrow H^1(F,Z(G))$, we have constructed a group homomorphism from
$G^{\ad}(F)/G(F)$ 
to characters on  $\pi_0(Z_\varphi)$.

\begin{remark}
For later use, we compare the homomorphism $G^{\ad}(F)/G(F) \rightarrow \pi_0(Z_\varphi)^\vee$ with a similar homomorphism constructed for a Levi subgroup in case the parameter $\varphi: W'_F\rightarrow {}^LG$
factors through ${}^LM$ as $ W'_F\stackrel{\varphi'}\rightarrow {}^LM \hookrightarrow {}^LG$. (We will continue to assume  $G$ to be a quasi-split group.)

For this,  we begin by noting that although there is no obvious map between  $G^{\ad}(F)/G(F)$ and $M^{\ad}(F)/M(F)$, if $Z$ is the centre of $G$, and $T$ a maximal torus in $G$ containing the maximally
split torus, the natural map from $(T/Z)(F)/T(F)$ to $G^{\ad}(F)/G(F)$  
is an isomorphism (cf. Lemma \ref{springer}).
Since there is a natural map from  $(T/Z)(F)/T(F)$ to $M^{\ad}(F)/M(F)$, we have a natural map from   $G^{\ad}(F)/G(F)$ to $M^{\ad}(F)/M(F)$. This gives rise 
to commutative diagrams of natural homomorphisms,
$$\xymatrixrowsep{1in}
\xymatrixcolsep{1in}
\xymatrix{  {G}^{\ad}(F)/G(F) \ar[r] \ar@{->}[d] 
& \ar[d]H^1(F,Z(G))  \ar[r] \ar@{->}[d]  & \ar[d] \pi_0(Z_\varphi)^\vee\\
 M^{\ad}(F)/M(F)\ar[r]  & H^1(F,Z(M))\ar[r] & \pi_0(Z_{\varphi'})^\vee .
}$$
\end{remark} 
$\hfill \Box$

  Let $g_0$ be the unique conjugacy class in $G^{\rm ad}(F)$ 
representing   an element in $T^{\rm ad}(F)$ (with $T^{\rm ad}$ a maximally split, maximal torus in $G^{\rm ad}(F)$) which acts by $-1$ on all simple root spaces of $T$ on $B$. Denote the corresponding $\eta_{g_0}$ by
$\eta_{-1}$, a character on $\pi_0(Z_{\varphi})$, which will be the trivial character 
for example if $g_0$ can be lifted to $G(F)$.

Let $c_{\wG}$ be the automorphism of $\wG$ preserving  a fixed pinning 
$(\wG,\widehat{B}, \wT, \{X_\alpha\})$   of $\wG$ 
and acting as $t\rightarrow w_{\wG}(t^{-1})$ on $\wT$ where $w_{\wG}$ is the longest element in the Weyl group of $\wT$ taking $\widehat{B}$ to $\widehat{B}^-$, 
the opposite Borel subgroup. Observe that $c_{\wG}=1$ if $-1 \in W_G=W_{\wG}$, thus in all simple groups excepts
those of type $A_n, D_{2n+1}, E_6$, $c_{\wG}=1$. 

The automorphism $c_{\wG}$ 
defined using a fixed pinning 
$(\wG,\widehat{B}, \wT, \{X_\alpha\})$   of $\wG$ 
belongs to the center of the group of diagram automorphisms of $\widehat{G}$, 
and hence extends to an 
automorphism of  ${}^LG(\C) = \widehat{G}(\C) \rtimes W_F$, which will again be denoted by 
$c_{\wG}$, and will be called the Chevalley involution of $\LG$.

\begin{conj} \label{conj2} Let $G$ be a reductive algebraic group over a local field $F$ which is a pure innerform of a 
quasi-split group $G_0$ over $F$ which comes equipped with a fixed triple $(B_0,N_0,\psi_0)$ used to parametrize 
representations on all pure innerforms of $G_0$. 
For an irreducible admissible representation $\pi$ of $G(F)$
with Langlands-Vogan parameter $(\varphi,\mu)$, the Langlands-Vogan parameter 
of $\pi^\vee$ is $(c_{\wG}\circ \varphi, (c_{\wG}\circ\mu)^\vee \otimes \eta_{-1})$, where $c_{\wG}$ is the Chevalley involution 
of ${}^L{G}(\C)$. (Since the Chevalley involution $c_{\wG}$ acts as $z\rightarrow z^{-1}$ on $Z(\wG)$, it follows 
that the character of the component group $c_{\wG}\circ\mu)^\vee \otimes \eta_{-1}$ 
defines the same pure innerform of $G_0$ as that defined by $\mu$.)

\end{conj}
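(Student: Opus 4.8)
The plan is to \emph{deduce} Conjecture \ref{conj2} from the known instances of the Langlands--Vogan parametrization (so at present $\GL_n$, quasi-split classical groups, and real groups) by first verifying two structural compatibilities that pin the formula down on the nose, and then reducing the general case to supercuspidal representations. First I would record the two sanity checks. \emph{The $L$- and $\varepsilon$-factor check:} a Chevalley involution of $\wG$ carries every algebraic representation $r$ of $\wG$ to its contragredient, so $r\circ c_{\wG}\circ\varphi\cong r^\vee\circ\varphi$, whence $L(s,c_{\wG}\circ\varphi,r)=L(s,\varphi,r^\vee)$ and likewise for $\gamma$- and $\varepsilon$-factors; since these match the expected $L(s,\pi^\vee,r)=L(s,\pi,r^\vee)$, the $\wG$-conjugacy class of the parameter of $\pi^\vee$ is forced to be that of $c_{\wG}\circ\varphi$ (for $\GL_n$ this is exactly the classical statement $\varphi\mapsto\varphi^\vee$, with no twist). \emph{The base-point check:} by Lemma \ref{Whit}, if $\pi$ is $\psi$-generic then $\pi^\vee$ is $\psi^{-1}$-generic; by Remark 1 the datum $\psi^{-1}$ is obtained from $\psi$ by conjugating by the class $g_0\in G^{\rm ad}(F)$ acting by $-1$ on all simple root spaces, so the GGP homomorphism $g\mapsto\eta_g$ predicts that switching the base point from $\psi$ to $\psi^{-1}$ twists the component-group character by precisely $\eta_{g_0}=\eta_{-1}$. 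For a generic packet the $\psi$-generic member has trivial $\mu$, hence $(c_{\wG}\circ\mu)^\vee$ is also trivial, and these two checks already prove the conjecture for every irreducible generic $\pi$ whenever the Whittaker-normalized parametrization is available; this is moreover consistent with Conjecture \ref{conj}, since the automorphism of $\LG$ dual to $\iota_G$ is $c_{\wG}$ up to the inner factor $\iota_-$ of $\iota_G$, whose only effect is this same change of Whittaker datum.

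Next I would reduce to supercuspidals by parabolic induction. Writing $\pi$ as a Langlands quotient $J(P,\sigma,\lambda)$ with $\sigma$ tempered on a Levi $M$ and $\lambda$ in the open positive chamber, one has $\pi^\vee=J(\bar P,\sigma^\vee,\lambda)$ and $\varphi$ is the image of the parameter $\varphi'$ of $\sigma$ under a fixed embedding $\LM\hookrightarrow\LG$. The compatibilities to check are: (i) $c_{\wG}$ restricted to $\LM$ is $\wG$-conjugate to $c_{\wM}$ followed by an element normalizing $\wM$, so that $c_{\wG}\circ\varphi$ is the image of $c_{\wM}\circ\varphi'$; (ii) under $\pi_0(Z_{\varphi'})\to\pi_0(Z_\varphi)$ and the dual map on characters, $(c_{\wG}\circ\mu)^\vee$ pulls back to $(c_{\wM}\circ\mu')^\vee$; and (iii) $\eta_{-1}$ for $G$ pulls back to $\eta_{-1}$ for $M$, which is exactly the commutative square of the Remark following Lemma \ref{Whit} evaluated on the class $g_0$. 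Granting the conjecture for the tempered $\sigma$ on $M$, (i)--(iii) return it for $\pi$; two further, entirely parallel reductions (tempered $\hookrightarrow$ induced from a discrete series of a Levi; discrete series $\hookrightarrow$ induced from a supercuspidal of a Levi) then leave only the supercuspidal case.

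For supercuspidal (and discrete series) $\pi$ the genuinely new content is the component-group character: one must show that $\pi^\vee$ is the member of the packet of $c_{\wG}\circ\varphi$ labelled by $(c_{\wG}\circ\mu)^\vee\otimes\eta_{-1}$. I would extract this from the endoscopic character identities defining the packet: apply the Chevalley involution simultaneously to $G$, to its endoscopic data, and to the transfer factors; check that it intertwines endoscopic transfer up to the $\eta_{-1}$ twist forced by the $\psi\mapsto\psi^{-1}$ renormalization; and then read off the effect on the internal parametrization from the stable character identities of Arthur and Moeglin for $p$-adic classical groups. For $F=\R$ one can instead invoke Adams's description in [Ad] of the contragredient via the Chevalley involution on a fundamental torus, checking that it matches the formula above as an element of the parametrization.

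The main obstacle is exactly this last step: the conjecture is only as strong as the parametrization underlying it, so a proof is possible at present only for $\GL_n$, classical groups, and real groups, and even there the technical heart is showing that passing to the contragredient intertwines endoscopic transfer \emph{exactly} up to $\eta_{-1}$ --- equivalently, that $c_{\wG}$ is compatible with the stabilization of the local character identities --- so that no character beyond $\eta_{-1}$ can intervene. Finally, the parenthetical assertion that $(c_{\wG}\circ\mu)^\vee\otimes\eta_{-1}$ and $\mu$ define the same pure inner form follows because the inner form is recorded by the restriction of the component-group character to $\pi_0(Z(\wG)^{W_F})$: there $c_{\wG}$ acts by $z\mapsto z^{-1}$, so the inversions coming from $c_{\wG}$ and from the contragredient cancel and $(c_{\wG}\circ\mu)^\vee$ agrees with $\mu$, while $\eta_{-1}$, being one of the GGP twists $\eta_g$ that by construction leave $G(F)$ (hence its inner form) unchanged, restricts trivially there.
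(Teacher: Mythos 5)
The statement you are addressing is labelled a \emph{conjecture} in the paper, so there is no proof of it to compare against; the paper only establishes supporting reductions and consistency checks. Your proposal is not a proof either, and you are candid about that in your final paragraph, so the right question is whether your plan matches the partial results the paper actually proves, and where it diverges.

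Your tempered-reduction step does correspond to what the paper proves. The paper's Lemma \ref{Lemma 2}, Lemma \ref{Lemma 3}, Lemma \ref{lemma6}, Corollary \ref{t_0}, Proposition \ref{cohomology}, Corollary \ref{identity}, and Proposition \ref{prop1} together show exactly your items (i)--(iii): if Conjecture \ref{conj2} holds for the tempered $\sigma$ on $M$, it holds for the Langlands quotient $\pi$. The commutative square you cite from ``the Remark following Lemma \ref{Whit}'' is indeed Remark 2, which handles the $\eta_{-1}$ pullback. What you gloss over, and the paper works hard on, is the $L$-group side of (i): it is not automatic that the Weyl-element conjugation intertwining $c_{\wM}$ and $c_{\wG}$ on $\wM$ extends to an automorphism of ${}^LG$ that is trivial on parameters. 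Lemma \ref{lemma6} (the longest Weyl element has a $W_F$-invariant lift up to center) and Proposition \ref{cohomology} (injectivity of $H^1(W_F,\wZ)\to H^1(W_F,\wT)$, used to replace $t_0$ by a $W_F$-invariant element) are precisely the technical content needed to make your step (i) rigorous; as stated, your plan assumes what they prove. Your assertion that the remaining reductions ``tempered $\hookrightarrow$ discrete series'' and ``discrete series $\hookrightarrow$ supercuspidal'' are ``entirely parallel'' is not something the paper carries out: the paper only treats the Langlands-quotient case in full, with Remark 10 flagging the irreducible unitary induction case as plausible but unverified (one would then also need to know the component group does not grow).

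The part of your plan that goes beyond the paper is the endoscopic-transfer argument for the discrete-series/supercuspidal case. That is not in the paper at all (the paper's support for the conjecture beyond the tempered reduction consists of the $\SL_n$ Whittaker argument in Example \ref{example1}, the $\PGSp_{2n}$ discussion, and the comparison with Adams for $F=\R$ in the section on Adams), and it is precisely where the conjecture remains open. Your final paragraph about the pure inner form being unchanged is consistent with the paper's parenthetical remark: both invoke that $c_{\wG}$ acts by inversion on $Z(\wG)$ and that $\eta_{-1}$ restricts trivially. So: your sanity checks match the paper's motivation, your tempered reduction is the paper's one nontrivial theorem toward the conjecture but with the $L$-group embedding subtleties suppressed, and your endoscopic step is an unproved proposal, not a match to anything in the paper.
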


\begin{remark} A consequence of the conjecture is that 
if the component groups are known to be elementary abelian 2-groups 
for a quasi-split 
reductive group $G_0(F)$, such as for any reductive group if $F=\R$,
or for classical groups defined using fields alone for any local field $F$, 
then all irreducible admissible representations 
of $G(F)$, a  pure innerform of $G_0(F)$, are selfdual if and only if $-1$ belongs to the Weyl group of $\wG$ 
(equivalently $c_{\wG} = 1$), and there is an 
$\iota_{-} \in T_0(F)$, where $T_0$ is a maximal torus in $B_0$, which acts by $-1$ on simple roots of $T_0$ in $B_0$.
\end{remark}

\begin{example} \label{example1}
We explicate  conjecture \ref{conj2} for $\SL_n(F)$ which in this case is an easy exercise. 
Let $\ell_a$ denote the natural (outer automorphism) action 
of $F^\times/F^{\times n}$ on $\SL_n(F)$ (corresponding to the conjugation action of 
$\GL_n(F)/F^\times \SL_n(F) \cong F^\times/F^{\times n}$ on $\SL_n(F)$, up to 
inner conjugation action of $F^\times \SL_n(F)$ on $\SL_n(F)$). Clearly, $(\ell_a \pi)^\vee \cong \ell_a \pi^\vee$ for any irreducible representation $\pi$ of $\SL_n(F)$. 
Now fix an irreducible representation $\pi$ 
of $\SL_n(F)$ which has a Whittaker model for the character $\psi:N \rightarrow \C^\times$ where $N$ is the group of upper triangular unipotent matrices in $\SL_n(F)$ and $\psi(n) = \psi_0(n_{1,2}+ \cdots + n_{n-1,n})$ for a fixed 
nontrivial character $\psi_0$ of $F$. Then by Lemma \ref{Whit}, $\pi^\vee$ 
has a Whittaker model for $\psi^{-1}$. Thus, if $\tau$ is the outer automorphism of $\SL_n(F)$ given by 
$\tau(g) = J {}^t\!g^{-1} J^{-1}$
where $J$ is the anti-diagonal matrix with each entry 1, 
 so that $\tau$  takes $(B,N,\psi)$ to $(B,N,\psi^{-1})$, then $\tau(\pi) = \pi^\vee$, 
by a combination of the Gelfand-Kazhdan theorem regarding the contragredient of an irreducible admissible representation  of 
$\GL_n(F)$ and the uniqueness of Whittaker model in an $L$-packet 
(i.e., given $\psi: N \rightarrow \C^\times$, there is a unique member in an $L$-packet of $\SL_n(F)$ with this Whittaker model), since $\pi^\vee$ is the unique irreducible representation of $\SL_n(F)$ in its $L$-packet with Whittaker model by $\psi^{-1}$. Having understood the action of $\tau$ on $\pi$, how about on other members $\ell_a\pi$ of the $L$-packet of $\pi$? Observe that, $\tau \circ \ell_a = \ell_{a^{-1}} \circ \tau$ (up to an element of $\SL_n(F)$). 
Therefore, $\tau(\ell_a \pi)= \ell_{a^{-1}}(\tau \pi) = \ell_{a^{-1}}(\pi^\vee)$. The upshot is that although
$\tau$ takes $\pi$ to its contragredient $\pi^\vee$, it does not take $\ell_a (\pi)$ to its contragredient, but 
to the contragredient of $\ell_{a^{-1}}( \pi)$. Our conjectures say exactly this for $\SL_n(F)$, and in general too it says something similar except that we cannot prove it for other groups since unlike for $\SL_n(F)$ where Whittaker models can be used to describe all members in an $L$-packet of 
representations,  this is not the case in general. (The change that we see for $\SL_n(F)$ from $\ell_a$ to $\ell_{a^{-1}}$ is in general the dual representation on the component group).
\end{example}

\begin{remark}
Conjecture \ref{conj2} allows for the possibility,
for groups such as  $\G_2, F_4$, or $E_8$ to have 
non-selfdual representations arising out of component groups (which can be 
$\Z/3,\Z/4$, and $\Z/5$ in these respective cases). 
Indeed $\G_2, F_4$, and  $E_8$ are known to have non 
selfdual representations over $p$-adic fields 
arising from compact induction of certain cuspidal unipotent 
representations of corresponding
finite groups of Lie type. {\it An important special case of both the conjectures 1 and 2
asserts that generic representations of these groups are selfdual; if the representation happens to be generic and supercuspidal, we are totally at a loss on this question.} (Global methods seem helpful here: globalizing
supercuspidal generic representations $\pi$ and $\pi^\vee$ to 
globally generic cuspidal representations unramified at all finite primes except this one, 
strong multiplicity one which one expects for globally generic representations, would prove $\pi \cong \pi^\vee$.)
\end{remark}

\begin{remark}
For a non-archimedean local field $F$, 
the projective (or, adjoint) symplectic group $\PGSp_{2n}(F)$ 
has a unique nontrivial pure innerform defined using the unique quaternion division algebra $D$ over 
$F$, call it $\PGSp_n(D)$.
Every irreducible  representation of $\PGSp_{2n}(F)$ is selfdual (by  a mild variation 
of the  theorem on page 91 in [MVW] for $\Sp_{2n}(F)$ to $\GSp_{2n}(F)$ according to which for any irreducible
admissible representation $\pi$ of $\GSp_{2n}(F)$, $\pi^\vee \cong \pi \otimes \omega_\pi^{-1}$ where $\omega_\pi$ is the central character of $\pi$
treated as a character of $\GSp_{2n}(F)$ using the similitude character $\GSp_{2n}(F)\rightarrow F^\times$.)
This is in conformity with our conjecture as $\PGSp_{2n}(F)$ is an adjoint group, 
with no diagram automorphisms; the $L$-group in this case is $\Spin_{2n+1}(\C)$, and the possible component groups are easily seen to be extensions of  subgroups of the component groups for $\SO_{2n+1}(\C)$ (which are elementary abelian 2-groups) by $\Z/2$, but since we are looking at $\PGSp_{2n}(F)$, the character of the component group is supposed to be trivial on the $\Z/2$ 
coming from the center of $\Spin_{2n+1}(\C)$, therefore such representations of the component group are actually representations of an elementary abelian 2-group, in particular selfdual.

In the work \cite{LST}, the authors prove that there is no analogue of MVW theorem for $\Sp_n(D), n \geq 3$, or for $ \SO_n(D), n \geq 5$. Looking at their proof, 
it is clear that their argument also proves that not every irreducible  representation of $\PGSp_{n}(D)$, or of $\PGSO_n(D)$ 
is selfdual. How does this compare with our conjecture \ref{conj2}? The only way out is to have more complicated component groups for 
$\Spin_{2n+1}(\C)$, in particular having non-selfdual representations for the component groups. 
The author has not seen any literature
on component groups for Spin groups. Since $\Spin_5(\C) \cong \Sp_4(\C)$, the component groups are abelian in this case, 
therefore the component group in $\Spin_{2n+1}(\C)$ 
could have non-selfdual representations only for $n \geq 3$, a condition consistent with [LST]. 
\end{remark}

\section{Reduction to the tempered case}\label{Rodier}

The aim of this section is to prove by using standard techniques that it suffices to prove Conjecture 2 about the contragredient for tempered representations. For doing this, we begin
with a general discussion of (enhanced) Langlands parameters of Langlands quotients.

Note that if an irreducible admissible representation $\pi$ of a reductive group $G$ is obtained as 
the Langlands quotient of a principal series $\Ind_P^G\sigma$, then the
Langlands parameter of $\pi$ is the same as the Langlands parameter of $\sigma$ considered as a parameter in ${}^LG$ via the natural embedding of the $L$-group
${}^LM$ in ${}^LG$ of a standard Levi subgroup $M$ of $G$. Here we remind ourselves  that since
$\pi$ is  the Langlands quotient of the principal series representation $\Ind_P^G\sigma$,  
the exponent of the representation $\sigma$ of $M$ (i.e., the absolute value of the character by 
which the maximal split central torus of $M$ acts on $\sigma$)   
is strictly positive. 
Under this condition, the centralizer of such a parameter $\varphi$ inside $\wM$ is the same as 
its centralizer in $\wG$. Calling these two centralizers as $Z_\varphi(\widehat{M})$ and $Z_\varphi(\wG)$, 
we have $Z_\varphi(\widehat{M}) = Z_\varphi(\wG)$, and hence an equality of the group of their connected components 
$\pi_0(Z_\varphi(\widehat{M})) = \pi_0(Z_\varphi(\wG))$, as well as 
of the set of irreducible representations of their component groups. 
 
 To have the enhanced local  Langlands correspondence, 
 we also need to fix Whittaker datum for the quasi-split groups $G$ and $M$. For this, we fix a 
 Borel subgroup $B=TN$ of $G$ with a non-degenerate character $\psi:N\rightarrow \C^\times$. Let $w_G$ 
be a fixed element of $G(F)$ normalizing $T$ and taking $B$ to the opposite Borel $B^-=TN^-$; note that for the action 
of $w_G$ on $T$ as an element in the Weyl group, we have $w_G t w_G^{-1} = w_G(t)$. Similarly, for a standard Levi subgroup $M$ of $G$, let $w_M$
be a fixed element of $M(F)$ normalizing $T$ and taking the positive roots of $T$ in $M$ to 
negative roots of $T$ in $M$.

By a theorem of Rodier, 
 it is known that a principal series representation
 $\pi = \Ind_P^G \sigma$ has a Whittaker model for a character $\psi: N \rightarrow \C^\times$  
if and only if $\sigma$ has a Whittaker model for the character 
$\psi_M: M \cap N \rightarrow \C^\times $ where $\psi_M$ is the character of 
 $M\cap N$ defined by $\psi_M(n) = \psi(w_Gw_Mnw_M^{-1}w_G^{-1})$. It is to be noted that the element $w_G$ (as well as $w_M$) 
is well-defined only up to an element of $T(F)$, and hence the character $\psi_M$ too is well-defined only up to
 conjugation by an element of $T(F)$
which is all that matters for fixing the enhanced Langlands correspondence for $M(F)$.  The 
 enhanced local  Langlands correspondence will be considered  for the groups $G$ and $M$ 
 using a fixed non-degenerate character $\psi: N \rightarrow \C^\times$, and the corresponding non-degenerate 
character $\psi_M: M \cap N \rightarrow \C^\times$ for $M$.

 The enhanced local Langlands correspondence is so setup that the representations $\pi$ and $\sigma$
have under the natural inclusion ${}^LM \hookrightarrow {}^LG$, the same parameters as well as the representations of their naturally isomorphic component groups. 

By Theorem 4.13 of [BT], the natural map from $G(F)$ to $(G/P)(F)$ is surjective, from which one deduces that 
the natural map from $H^1(\Gal(\bar{F}/F), P)$ to  $H^1(\Gal(\bar{F}/F), G)$ is injective. Therefore if $M$ 
is a Levi subgroup in $G$, then the natural maps $H^1(\Gal(\bar{F}/F), M)  \rightarrow 
H^1(\Gal(\bar{F}/F), P) \rightarrow H^1(\Gal(\bar{F}/F), G)$ are all injective. As a consequence, pure innerforms of $M$     give rise to distinct pure innerforms of $G$ in which 
they lie as Levi subgroups. This allows one to associate to a  Vogan $L$-packet on pure innerforms $M_\alpha$ of a quasi-split Levi group $M_0$, 
a Vogan $L$-packet on pure innerforms $G_\alpha$ of a quasi-split group $G_0$
by taking the Langlands quotient of the corresponding principal series representations on $G_\alpha$ with
$M_\alpha$ as the Levi subgroup.

We now come to the following crucial lemma about the contragredient in terms of Langlands quotients. 
\begin{lemma} \label{Lemma 2}
Assume that an irreducible admissible representation $\pi$ of a reductive group $G=G(F)$ is obtained as a Langlands quotient
$\Ind_P^G\sigma \rightarrow \pi$ 
with $\sigma$ 
an irreducible representation of a Levi subgroup $M$ of $P$ which is tempered up to twisting by a 
character of $M$, and that  the exponent of the irreducible representation $\sigma$ of $M$ 
restricted to the maximal split torus in the centre of $M$ is strictly positive. Let $P^-$ be the parabolic `opposite' to $P$, i.e., with 
$P^-\cap P = M$. Let $P'=w_Gw_M(P^-)=w_G(P^-)$. Then $P'$ is a standard parabolic in $G$, 
i.e., contains the Borel subgroup $B$ fixed earlier, and is an associate of $P$ with $M'= w_Gw_M(M)=w_G(M)$.  The contragredient representation $\pi^\vee$ is obtained as a Langlands
quotient 
$\Ind_{P^-}^G\sigma^\vee \rightarrow \pi^\vee$, 
as well as the Langlands quotient of $\Ind_{P'}^G(\sigma'^\vee) \rightarrow \pi^\vee$ where $\sigma'$ is the representation 
of $M' = w_Gw_M(M)$ from the representation $\sigma$ of $M$ through conjugation by $w_Gw_M$.
\end{lemma}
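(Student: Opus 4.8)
The plan is to deduce the statement from four standard facts, applied in sequence: (i) the contragredient commutes with normalized parabolic induction, $(\Ind_Q^G\tau)^\vee\cong\Ind_Q^G(\tau^\vee)$, over both archimedean and non-archimedean $F$; (ii) the Langlands classification in its two forms — a module $\Ind_Q^G(\tau)$ induced from a parabolic $Q=MU$ from a representation $\tau$ of $M$ that is tempered up to a twist and has exponent \emph{strictly positive} with respect to $Q$ has a unique irreducible quotient, while if the exponent is \emph{strictly negative} it has a unique irreducible submodule; (iii) the identification of the Langlands quotient of $\Ind_{Q^-}^G(\tau)$ (same inducing data, opposite parabolic) with the unique irreducible submodule of $\Ind_Q^G(\tau)$, realized as the image of the long intertwining operator between them; and (iv) the triviality of an inner automorphism of $G(F)$ on isomorphism classes of representations, which I will apply to conjugation by the element $w_Gw_M\in G(F)$. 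The argument then has three steps: first pin down the geometry of $P'$, then realize $\pi^\vee$ as a Langlands quotient induced from the (non-standard) opposite parabolic $P^-$, and finally conjugate by $w_Gw_M$ to move the inducing parabolic to the standard associate $P'$.

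For the geometry step I would argue as follows. Writing $P=MN_P$ with root set $\Phi(N_P)\subset\Phi^+$, so that $\Phi(N_P^-)=-\Phi(N_P)$, I note that the Weyl group $W_M$ permutes $\Phi^+\setminus\Phi_M$: any $\beta$ in this set has a strictly positive coefficient on some simple root outside $M$, and a simple reflection in a simple root of $M$ leaves that coefficient unchanged, so it sends $\beta$ to another element of $\Phi^+\setminus\Phi_M$. Hence conjugation by $w_M$ fixes $N_P^-$ and therefore $P^-$, which already gives $P'=w_Gw_M(P^-)=w_G(P^-)$. Since $w_G$ conjugates $B$ to $B^-$ and $w_G^2\in T$, it conjugates $B^-$ back to $B$, so $P'=w_G(P^-)\supseteq w_G(B^-)=B$ is standard, with Levi $M'=w_G(M)=w_Gw_M(M)$. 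Finally $P$ and $P^-$ are associate since they share the Levi $M$, and $P'$ is $G(F)$-conjugate to $P^-$ via $w_Gw_M$, so $P'$ is an associate of $P$ with $M'=w_Gw_M(M)$.

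For the remaining two steps: dualizing the surjection $\Ind_P^G\sigma\twoheadrightarrow\pi$ and using (i) produces an injection $\pi^\vee\hookrightarrow\Ind_P^G(\sigma^\vee)$, and since $\pi$ is the \emph{unique} irreducible quotient of $\Ind_P^G\sigma$, this exhibits $\pi^\vee$ as the unique irreducible submodule of $\Ind_P^G(\sigma^\vee)$. Now $\sigma^\vee$ is again tempered up to a twist, and inverting the central character negates the exponent, so $\sigma^\vee$ has exponent strictly negative with respect to $P$, equivalently strictly positive with respect to $P^-$; by (ii) and (iii), $\Ind_{P^-}^G(\sigma^\vee)$ has a unique irreducible quotient, and it equals the unique irreducible submodule of $\Ind_P^G(\sigma^\vee)$, namely $\pi^\vee$. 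This gives the presentation $\Ind_{P^-}^G\sigma^\vee\twoheadrightarrow\pi^\vee$. Applying conjugation by $w:=w_Gw_M\in G(F)$, which by (iv) fixes $\pi^\vee$ up to isomorphism, turns this into $\Ind_{P'}^G(\sigma'^\vee)\twoheadrightarrow\pi^\vee$ with $\sigma'=\sigma^{w}$ the representation of $M'=w(M)$ obtained from $\sigma$ by conjugation (note $\sigma^{w_Gw_M}\cong\sigma^{w_G}$ as representations of $M'$, since $w_M\in M(F)$); conjugation preserves temperedness up to a twist and carries the positivity of the exponent of $\sigma^\vee$ relative to $P^-$ to positivity of the exponent of $\sigma'^\vee$ relative to $w(P^-)=P'$, so, $P'$ being standard, this is a genuine Langlands-quotient presentation.

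The main obstacle is not the bookkeeping but the input (iii): the identification of the Langlands quotient induced from the opposite parabolic with the Langlands submodule induced from $Q$ via the long intertwining operator. For non-archimedean $F$ this is classical, but in the archimedean case it must be carried out at the level of Harish-Chandra modules (globalizing afterwards), and one must be slightly careful that "tempered up to a twist with strictly positive exponent" is exactly the Langlands data and that the $\vee$ (with its sign flip on the exponent) and the conjugation by $w_Gw_M$ both respect this data — which is what makes the two presentations in the statement legitimate Langlands quotients rather than merely some subquotients.
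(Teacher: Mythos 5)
Your proof is correct and follows essentially the same route as the paper: both rest on the Borel--Wallach fact that the Langlands quotient of $\Ind_P^G\sigma$ coincides with the Langlands submodule of $\Ind_{P^-}^G\sigma$, dualize to express $\pi^\vee$ as a Langlands quotient from $P^-$, and then transport by the inner automorphism $w_Gw_M$ to reach the standard parabolic $P'$. The only differences are cosmetic: you dualize the surjection $\Ind_P^G\sigma\twoheadrightarrow\pi$ and then invoke the opposite-parabolic intertwining identification, whereas the paper directly dualizes the submodule realization $\pi\hookrightarrow\Ind_{P^-}^G\sigma$; and you supply the root-theoretic verification that $w_M$ fixes $P^-$ and that $P'=w_G(P^-)$ is standard, which the paper leaves to the reader.
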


\begin{proof} From [BW], chapter XI, Proposition 2.6 and Corollary 2.7, it follows that the representation  $\pi$ appears in 
$\Ind_P^G\sigma $ 
as a quotient as well as a submodule in $\Ind_{P^-}^G \sigma$; dualizing, we realize  $\pi^\vee$ as a  quotient 
of $\Ind_{P^-}^G\sigma^\vee$.

We note that the principal series representation $\Ind_{P^-}^G\sigma^\vee $ is a standard module with strictly positive exponents, 
so the map $\Ind_{P^-}^G\sigma^\vee \rightarrow \pi^\vee$ realizes $\pi^\vee$ as a Langlands quotient.

We leave the simple checking that $P'=w_Gw_M(P^-)$ is a standard parabolic in $G$ to the reader.  
The natural isomorphism induced by a conjugation action
in $G$ taking $P^-$ to $P'$, $M$ to $M'$ and $\sigma$ to $\sigma'$ proves 
the assertion on $\pi^\vee$ being the Langlands quotient of $\Ind_{P'}^G(\sigma'^\vee) $. 
\end{proof}

\begin{lemma} \label{Lemma 3}Let $\wG$ be a connected reductive algebraic group over $\C$ with a fixed Borel subgroup 
$\wB=\wT \wN$. Let $\wP \supset \wB$ be a standard parabolic with $\wP=\wM \wN$. Fix a 
pinning  $(\wG,{\wB}, \wT, \{X_\alpha\})$   
of $\wG$ which restricts to give a pinning $(\wM,{\wB}_{\wM}, \wT, \{X_\beta\})$   of $\wM$. 
Let $c_{\wG}$ be the automorphism of $\wG$
preserving  the pinning  $(\wG,{\wB}, \wT, \{X_\alpha\})$   
and acting as $t\rightarrow w_{\wG}(t^{-1})$ on $\wT$. 
Similarly, 
let $c_{\wM}$ be the automorphism of $\wM$
preserving  the pinning  $(\wM,\wB_{\wM}, \wT, \{X_\beta\})$   and acting as $t\rightarrow w_{\wM}(t^{-1})$ on $\wT$. 
 Fix representatives of Weyl group elements $w_{\wG}$ and $w_{\wM}$ in $\wG$ and $\wM$ respectively, and 
by an abuse of language, denote them also as $w_{\wG}$ and $w_{\wM}$. Then there is an 
element $t_0 \in \wT$ such that:
$$ t_0 c_{\wM}(m) t_0^{-1}  = w_{\wG} w_{\wM}( c_{\wG}(m))w_{\wM}^{-1}w_{\wG}^{-1} \quad \quad \forall m \in \wM. \quad \quad (\star)$$

\end{lemma}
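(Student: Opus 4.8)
The plan is to compare the two automorphisms of $\wM$ appearing on the two sides of $(\star)$ directly on the pinning $(\wM, \wB_{\wM}, \wT, \{X_\beta\})$, and conclude via the rigidity statement that two automorphisms of a reductive group which agree on a pinning up to the inner action of the torus actually differ by conjugation by a single torus element. Concretely, set $\theta_1 := c_{\wM}$ and let $\theta_2$ be the automorphism $m \mapsto w_{\wG}w_{\wM}(c_{\wG}(m))w_{\wM}^{-1}w_{\wG}^{-1}$ of $\wG$, which I must first check preserves $\wM$. For $\theta_1$ this is immediate from the definition. For $\theta_2$: $c_{\wG}$ preserves $\wB$ and $\wT$, and on $\wT$ it acts by $t\mapsto w_{\wG}(t^{-1})$, so $c_{\wG}(\wM)$ is the Levi generated by $\wT$ and the root subgroups for $-w_{\wG}(\beta)$, $\beta$ a root of $\wM$; then conjugating by $w_{\wG}w_{\wM}$ — which is a Weyl element carrying the root system of $\wM$ to itself via $w_{\wM}$ and negating once via $w_{\wG}$ in a controlled way — brings this back to $\wM$. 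I would spell out that $w_{\wG}w_{\wM}$ sends the simple roots $\{\beta\}$ of $\wM$ (inside $\wB$) to a set of simple roots of $\wM$, so that $\theta_2$ restricted to $\wM$ is an automorphism preserving $\wT$ and sending $\wB_{\wM}$ to itself.

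Next I would compute the action of both $\theta_1$ and $\theta_2$ on $\wT$. For $\theta_1$ it is $t\mapsto w_{\wM}(t^{-1})$ by construction. For $\theta_2$, on $\wT$ we get $t\mapsto w_{\wG}w_{\wM}(w_{\wG}(t^{-1})) = w_{\wG}w_{\wM}w_{\wG}(t^{-1})$; the point is that since $\theta_2$ preserves $\wM$ and its action on $X^*(\wT)$ preserves the root system $R_{\wM}$ and the positive system $R_{\wM}^+$, the linear map $-w_{\wG}w_{\wM}w_{\wG}$ must fix the dominant chamber of $\wM$, i.e.\ act on $R_{\wM}$ as a diagram automorphism composed with $-w_{\wM}$; but then on the span of $R_{\wM}$ it agrees with $-w_{\wM}$ precisely when the diagram automorphism is the one induced by $-w_{\wM}\circ(-w_{\wM}) = \mathrm{id}$... — this is the delicate bookkeeping step. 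Cleaner: observe that $-w_{\wG}w_{\wM}w_{\wG}$ and $-w_{\wM}$ both send $R_{\wM}^+$ to $R_{\wM}^+$ and both coincide on $X^*(\wT)/X^*(\wT)_{R_{\wM}^{\perp}}$ with the opposition involution of the Dynkin diagram of $\wM$; hence they differ by an element of $W_{\wM}$ that fixes $R_{\wM}^+$, which is trivial. So $\theta_1$ and $\theta_2$ agree on $\wT$ as abstract automorphisms of the torus.

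Finally, $\theta_1$ and $\theta_2$ are two automorphisms of $\wM$ that preserve $\wT$, agree on $\wT$, and each sends $\wB_{\wM}$ to $\wB_{\wM}$; therefore $\theta_2\circ\theta_1^{-1}$ is an automorphism of $\wM$ fixing $\wT$ pointwise and preserving $\wB_{\wM}$, hence it is inner by an element of $\wT$ (an automorphism of a reductive group fixing a maximal torus pointwise and a Borel containing it is conjugation by a unique element of that torus). This gives the desired $t_0\in\wT$ with $t_0 c_{\wM}(m)t_0^{-1} = \theta_2(m)$ for all $m\in\wM$, which is $(\star)$. The main obstacle is the middle step: verifying carefully that $\theta_2$ genuinely restricts to an automorphism of $\wM$ and that its restriction acts on $\wT$ exactly by $t\mapsto w_{\wM}(t^{-1})$ — i.e.\ that the ``extra'' Weyl conjugation by $w_{\wG}w_{\wM}$ precisely undoes the discrepancy between the long element of $W_{\wG}$ and the long element of $W_{\wM}$ on the root system of $\wM$. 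Everything else (existence and uniqueness of $t_0$, compatibility of pinnings) is formal.
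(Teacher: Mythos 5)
Your overall strategy --- compare the two maps on $\wT$ and on the root subgroups of $\wM$, then invoke the rigidity fact that an automorphism fixing $\wT$ pointwise and preserving $\wB_{\wM}$ must be conjugation by an element of $\wT$ --- is exactly the paper's, spelled out a bit more explicitly. But the ``delicate bookkeeping'' step you flag is a genuine gap, not merely a detail to be filled in. With the conjugation written as $m\mapsto w_{\wG}w_{\wM}\,c_{\wG}(m)\,w_{\wM}^{-1}w_{\wG}^{-1}$, the induced action on the character lattice of $\wT$ is $-w_{\wG}w_{\wM}w_{\wG}$, and $w_{\wG}w_{\wM}w_{\wG}$ is the long element of the Weyl group of the standard Levi whose set of simple roots is the image under $-w_{\wG}$ of those of $\wM$, \emph{not} the long element of the Weyl group of $\wM$ itself. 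When $-w_{\wG}$ does not stabilize the simple roots of $\wM$ --- take $\wG=\GL_3(\C)$ and $\wM$ the upper-left $\GL_2\times\GL_1$, with simple root $\alpha_1$: then $w_{\wG}w_{\wM}w_{\wG}(\alpha_1)=\alpha_1+\alpha_2$, which is not a root of $\wM$ --- this composite does not preserve the positive roots of $\wM$. So $\theta_2$ does not restrict to an automorphism of $\wM$, it does not agree with $c_{\wM}$ on $\wT$, and the reduction to the rigidity lemma cannot even start. The claim you hoped to verify, that $-w_{\wG}w_{\wM}w_{\wG}$ and $-w_{\wM}$ agree on the roots of $\wM$, is false.

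The argument closes if one conjugates by $w_{\wM}w_{\wG}=(w_{\wG}w_{\wM})^{-1}$ instead. Then on $\wT$ the composite is $t\mapsto w_{\wM}w_{\wG}\bigl(w_{\wG}(t^{-1})\bigr)=w_{\wM}(t^{-1})=c_{\wM}(t)$, and on the simple roots of $\wM$ it acts by $-w_{\wM}w_{\wG}w_{\wG}=-w_{\wM}$, which does permute them; with this correction your comparison of automorphisms of $\wM$ goes through, and it coincides with what the paper's one-line proof is actually computing: the cited identity $w_{\wG}(t)=w_{\wG}w_{\wM}\cdot w_{\wM}(t)$ rewrites as $c_{\wG}(t)=w_{\wG}w_{\wM}\bigl(c_{\wM}(t)\bigr)$, i.e.\ $c_{\wM}(t)=(w_{\wG}w_{\wM})^{-1}\bigl(c_{\wG}(t)\bigr)$, which is the \emph{inverse} of the conjugation in your $\theta_2$. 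In short: the approach is the right one and is the paper's, but the very identification you were unsure about --- $-w_{\wG}w_{\wM}w_{\wG}=-w_{\wM}$ on $R_{\wM}$ --- is the thing that fails, and is why the direction of the Weyl conjugation has to be reversed for either your argument or the paper's to be correct.
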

\begin{proof} Observe that because $w_{\wG}(t) = w_{\wG} w_{\wM} w_{\wM}(t)$ for $t \in \wT$, the two homomorphisms of $\wM$ into $\wG$ 
\begin{eqnarray*} m & \longrightarrow &  c_{\wM}(m), \\
m & \longrightarrow & w_{\wG} w_{\wM}( c_{\wG}(m))w_{\wM}^{-1}w_{\wG}^{-1},
\end{eqnarray*}
preserve $\wT$, and are the same  on $\wT$; further,  
both of them take the root spaces $\alpha$ of $\wT$ in $\wM$ to 
root spaces $-w_{\wG}(\alpha)$. Therefore the two homomorphisms of $\wM$ into $\wG$ differ by conjugation by an element of  $\wT$, proving the lemma. 
\end{proof}

In the next section (Corollary \ref{t_0})
we will improve this lemma to an assertion on
$L$-groups by being more precise about $t_0, w_G,w_M$.

\section{Embedding of $L$-groups}
This section fills up a small detail on embedding of $L$-groups from embedding of their identity component
required to extend Lemma \ref{Lemma 3} to an assertion on
$L$-groups, and then to prove that our Conjecture 2 is compatible with parabolic induction.

We begin by  proving that for a standard (relevant) Levi subgroup $\wM$ inside $\wG$, 
conjugation by $w_{\wG}w_{\wM}$ 
in $\wG$ (for suitably chosen representatives of these elements in the Weyl group) extends to an automorphism of 
 ${}^LG(\C) = \wG \rtimes W_F$ as an $L$-group which is the identity map on $W_F$; only such morphisms of 
$L$-groups  which are identity on $W_F$ will be considered in this section. 
Presumably these considerations have occurred in the literature when
one identifies an associate class of parabolics in a quasi-split group with an associate class of {\it relevant} parabolics 
in the $L$-group. 
 
Let ${}^LG(\C) = \wG \rtimes W_F$ be an $L$-group. 
Denote the action of an element $\sigma \in W_F$ on $g\in \wG$ by $g^\sigma$.
An automorphism of $\wG$ given by the inner conjugation action of $g \in \wG$ need not extend to an automorphism of ${}^LG$. Here is the lemma regarding it.

\begin{lemma} \label{trivial} An automorphism of $\wG$ given by the inner conjugation action of $g_0 \in \wG$ extends to an automorphism of ${}^LG$ (which is identity on $W_F$) if and only if for all $\sigma \in W_F$, $g_0^{\sigma} = g_0 \cdot z(\sigma)$ for some $z(\sigma) \in Z(\wG)$.
\end{lemma}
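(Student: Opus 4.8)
The plan is to reduce the whole statement to a one-line computation inside the semidirect product ${}^LG = \wG\rtimes W_F$. Identify $W_F$ with its canonical copy in ${}^LG$ and write a general element as $g\sigma$ with $g\in\wG$, $\sigma\in W_F$; with the convention $g^{\sigma} = \sigma g\sigma^{-1}$ for the action of $W_F$ on $\wG$, the multiplication reads $(g_1\sigma_1)(g_2\sigma_2) = g_1\,g_2^{\sigma_1}\,\sigma_1\sigma_2$. First I would observe that any automorphism $\Phi$ of ${}^LG$ that restricts to ${\rm Int}(g_0)$ on $\wG$ and to the identity on $W_F$ is forced, because it is a homomorphism and $g\sigma = g\cdot\sigma$, to be given by $\Phi(g\sigma) = (g_0 g g_0^{-1})\,\sigma$; conversely this formula manifestly defines a bijection of the underlying set of ${}^LG$ which restricts to ${\rm Int}(g_0)$ on $\wG$ and fixes $W_F$ pointwise. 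Hence the lemma reduces entirely to deciding when this map $\Phi$ is multiplicative; once it is, it is automatically a group automorphism, its inverse being the analogous map attached to $g_0^{-1}$ (which satisfies the same hypothesis because $Z(\wG)$ is a subgroup and $(g_0^{-1})^{\sigma}=(g_0^{\sigma})^{-1}$).

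Next I would simply expand both sides of $\Phi\big((g_1\sigma_1)(g_2\sigma_2)\big) = \Phi(g_1\sigma_1)\,\Phi(g_2\sigma_2)$. The left side works out to $(g_0 g_1 g_0^{-1})(g_0 g_2^{\sigma_1} g_0^{-1})\,\sigma_1\sigma_2$. For the right side I would push $\sigma_1$ past $g_0 g_2 g_0^{-1}$ and use that $\sigma_1$ acts on $\wG$ by a group automorphism, obtaining $(g_0 g_1 g_0^{-1})\,\big(g_0^{\sigma_1} g_2^{\sigma_1}(g_0^{\sigma_1})^{-1}\big)\,\sigma_1\sigma_2$. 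Cancelling the common prefix $g_0 g_1 g_0^{-1}$ and the common suffix $\sigma_1\sigma_2$, multiplicativity of $\Phi$ becomes the requirement that $g_0\,h\,g_0^{-1} = g_0^{\sigma}\,h\,(g_0^{\sigma})^{-1}$ for all $h\in\wG$ (here $h=g_2^{\sigma_1}$ ranges over all of $\wG$ as $g_2$ does) and all $\sigma\in W_F$. This says precisely that $g_0$ and $g_0^{\sigma}$ induce the same inner automorphism of $\wG$, i.e. that $g_0^{-1}g_0^{\sigma}$ centralizes $\wG$, i.e. lies in $Z(\wG)$ — which is exactly the condition $g_0^{\sigma} = g_0\cdot z(\sigma)$ with $z(\sigma)\in Z(\wG)$. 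I would close by noting that $\sigma\mapsto z(\sigma):=g_0^{-1}g_0^{\sigma}$ is then automatically a continuous $1$-cocycle of $W_F$ valued in $Z(\wG)$, which one may wish to record for later use.

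I do not expect any genuine obstacle here: the argument is a routine manipulation in a semidirect product. The only things to watch are (i) the bookkeeping of conventions — where the lifts of Weil-group elements are placed, and whether the action is written $\sigma g\sigma^{-1}$ or $\sigma^{-1}g\sigma$, since this merely flips $z(\sigma)$ to $z(\sigma)^{-1}$ and does not affect the statement — and (ii) the small structural remark in the first step that ``extends ${\rm Int}(g_0)$ and is the identity on $W_F$'' pins $\Phi$ down uniquely, so that the ``if'' and ``only if'' directions of the lemma are two readings of the same equivalence rather than separate arguments.
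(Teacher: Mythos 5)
Your proof is correct, and it supplies the verification the paper dismisses with a single word (``Clear!''): expanding multiplicativity of the proposed extension $\Phi(g\sigma)=(g_0gg_0^{-1})\sigma$ in the semidirect product reduces the condition to $g_0^{-1}g_0^{\sigma}\in Z(\wG)$ for all $\sigma\in W_F$, and the uniqueness observation in your first step collapses ``if'' and ``only if'' into one computation. This is exactly the argument the paper has in mind, so there is no divergence in approach to report. One tacit point worth making explicit: you read ``identity on $W_F$'' as fixing the canonical splitting $W_F\hookrightarrow{}^LG$ pointwise, not merely as inducing the identity on the quotient ${}^LG/\wG\cong W_F$; the latter reading would let $\Phi(\sigma)=g_0(g_0^{\sigma})^{-1}\sigma$, making $\Phi$ the full inner conjugation by $g_0$ on ${}^LG$ and rendering the lemma vacuously true, so your (stronger) reading is the only one consistent with the lemma being a genuine constraint — it would be worth a half-sentence in a final write-up to flag that convention.
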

\begin{proof}Clear! \end{proof}

Next, we see some situations in which we do have elements in $\wG$ satisfying the condition in Lemma \ref{trivial}.

Observe that ${}^LG(\C) = \wG \rtimes W_F$ comes equipped with a $W_F$-invariant pinning $(\wG,{\wB}, \wT, \{X_\alpha\})$   
on $\wG$ which restricts to give a $W_F$-invariant  pinning $(\wM,{\wB}_{\wM}, 
\wT, \{X_\beta\})$   on $\wM$ for all
{\it relevant} Levi subgroups in ${}^LG$.

Since $\wT$ is invariant under $W_F$, its normalizer $N_{\wG}(\wT)$ in $\wG$ is also invariant under $W_F$, giving rise to an action of $W_F$ on the Weyl group 
$W_{\wG}(\wT) 
= N_{\wG}(\wT)/\wT$; more generally, since a relevant Levi subgroup $\wM$ is invariant under 
$W_F$, the normalizer $N_{\wM}(\wT)$ 
of $\wT$ in $\wM$ is also invariant under $W_F$, giving rise to an
action of $W_F$ on  the Weyl group 
$W_{\wM}(\wT) 
= N_{\wM}(\wT)/\wT$.

The Weyl group $W_{\wM}(\wT)$ 
has the element $w_{\wM}$ which takes the Borel subgroup ${\wB}_{\wM}$ to its opposite Borel subgroup. 
We claim that these elements $w_{\wM}$ for any relevant Levi subgroup in ${}^LG$ 
are invariant (as elements of the Weyl group 
$W_{\wM}(\wT)$) under $W_F$. For this it suffices
to note that for any $\sigma \in W_F$, $w_{\wM}^\sigma$ still takes the Borel subgroup ${\wB}_{\wM}$ to its opposite Borel which is clear by
applying $\sigma$ to the equality $w_{\wM}({\wB}_{\wM}) =  {\wB}^-_{\wM}$, and noting that both 
${\wB}_{\wM} = {\wB} \cap \wM $ and  $ {\wB}^-_{\wM} = \wB^- \cap \wM$ are invariant under $W_F$.

\begin{lemma} \label{lemma6} Let ${}^LG(\C) = \wG \rtimes W_F$ be an $L$-group. Any element in the Weyl group of 
$\wG$ which is invariant under $W_F$ is represented by an element in $\wG^{W_F}$ up to $Z(\wG)$.
In particular, the longest element $w_{\wM}$ in a relevant Levi subgroup $\wM$ in ${}^LG$ has a lift to  $N_{\wM}(\wT)$ 
which is $W_F$-invariant up to $Z(\wG)$, i.e. there is a lift $n_{\wM}$ of $w_{\wM}$ to $N_{\wM}(\wT)$ such that for all 
$\sigma \in W_F$,
$n_{\wM}^\sigma = n_{\wM} z(\sigma)$ for some $z(\sigma) \in Z(\wG)$. 
\end{lemma}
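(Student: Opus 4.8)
The plan is to produce the required lift by means of Tits' canonical representatives of Weyl group elements attached to the fixed $W_F$-invariant pinning. Recall that for each simple root $\alpha$ of $\wT$ in $\wB$ the pinning vector $X_\alpha$ determines a unique $X_{-\alpha}$ in the $(-\alpha)$-root space with $[X_\alpha, X_{-\alpha}] = \alpha^\vee$, and hence an element $n_\alpha = \exp(X_\alpha)\exp(-X_{-\alpha})\exp(X_\alpha) \in N_{\wG}(\wT)$ lifting the simple reflection $s_\alpha$; by Tits' lemma, if $w = s_{\alpha_1}\cdots s_{\alpha_k}$ is a reduced expression, the product $n_w := n_{\alpha_1}\cdots n_{\alpha_k}$ depends only on $w$ and not on the chosen reduced word. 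First I would observe that the action of $W_F$ on $\wG$ defining ${}^LG$ preserves the pinning $(\wG,\wB,\wT,\{X_\alpha\})$, hence permutes the $X_\alpha$ according to its action on the simple roots; by the uniqueness just mentioned it permutes the $X_{-\alpha}$ and the coroots $\alpha^\vee$ in the same way, so $\sigma(n_\alpha) = n_{\sigma(\alpha)}$ for every $\sigma \in W_F$. Since $\sigma$ induces an automorphism of the Coxeter system $(W_{\wG}(\wT), \{s_\alpha\})$ and therefore carries reduced words to reduced words, this gives $\sigma(n_w) = n_{\sigma(w)}$ for all $w \in W_{\wG}(\wT)$.

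Granting this, the general statement is immediate: if $w \in W_{\wG}(\wT)$ is $W_F$-invariant then $\sigma(n_w) = n_{\sigma(w)} = n_w$ for every $\sigma$, so $n_w \in N_{\wG}(\wT)^{W_F} \subset \wG^{W_F}$ --- in fact one may take $z(\sigma) = 1$, which is slightly stronger than asked. For the Levi case, the restriction of the pinning is a $W_F$-invariant pinning $(\wM, \wB_{\wM}, \wT, \{X_\beta\})$ of the relevant Levi subgroup $\wM$, the set $\{X_\beta\}$ being exactly the subset of $\{X_\alpha\}$ indexed by the simple roots of $\wT$ in $\wM$. Hence $n_\beta$ for such a $\beta$ is the same element whether formed inside $\wM$ or inside $\wG$, and, since the length function of $W_{\wM}(\wT)$ agrees with that of $W_{\wG}(\wT)$ on elements of $W_{\wM}(\wT)$, so is $n_{w_{\wM}}$ for the longest element $w_{\wM}$ of $W_{\wM}(\wT)$; moreover $n_{w_{\wM}} \in N_{\wM}(\wT)$. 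As $w_{\wM}$ was shown above to be $W_F$-invariant, the previous paragraph (applied inside $\wM$, or directly) yields $n_{\wM} := n_{w_{\wM}}$ with $n_{\wM}^\sigma = n_{\wM}$, and a fortiori $n_{\wM}^\sigma = n_{\wM} z(\sigma)$ with $z(\sigma) \in Z(\wG)$, as required.

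The only point where care is needed --- and where an error could slip in --- is the equivariance $\sigma(n_w) = n_{\sigma(w)}$: it rests on the uniqueness of $X_{-\alpha}$ given $X_\alpha$, so that any pinning-preserving automorphism automatically respects the negative pinning vectors and the coroots, together with the compatibility of Tits' section of $\wM$ with that of $\wG$; both facts are standard but should be stated explicitly. An alternative, purely cohomological route is to note that any lift $n_0 \in N_{\wG}(\wT)$ of a $W_F$-invariant $w$ produces a class $[\sigma \mapsto \sigma(n_0) n_0^{-1}] \in H^1(W_F, \wT)$ whose vanishing is equivalent to the existence of a $W_F$-fixed lift, and then to check this class is trivial; but proving its triviality essentially amounts to exhibiting the Tits lift, so the argument via Tits' section is the cleaner one, and it in fact delivers more than the stated conclusion ``up to $Z(\wG)$''.
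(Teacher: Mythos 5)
Your proof is correct and takes a genuinely different route from the paper's. The paper proves the lemma by reducing to the case of a simple adjoint group and then invoking, case by case for types $A_n$, $D_n$, $E_6$, the classical fact that $W^\sigma$ coincides with the Weyl group of $\wG^\sigma$ for a pinning-preserving automorphism $\sigma$; the ``up to $Z(\wG)$'' in the statement is there precisely to absorb the passage between $\wG$ and its adjoint quotient. You instead bypass the reduction entirely by exploiting the Tits section $w \mapsto n_w$ attached to the $W_F$-invariant pinning: since a pinning-preserving $\sigma$ automatically permutes the negative pinning vectors and coroots (by uniqueness of $X_{-\alpha}$), one gets $\sigma(n_\alpha)=n_{\sigma(\alpha)}$, and since $\sigma$ is an automorphism of the Coxeter system it preserves reduced words, so Tits' independence of the reduced word gives $\sigma(n_w)=n_{\sigma(w)}$ for all $w$. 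This is uniform in type (in particular it handles the $S_3$ action on $D_4$ without any special pleading), avoids the adjoint reduction and the case analysis, and yields the sharper conclusion that one may take $z(\sigma)=1$ --- the lift lands in $\wG^{W_F}$ on the nose, not just up to $Z(\wG)$. Your remark that the $H^1(W_F,\wT)$ reformulation does not really save work is also apt. The only small thing to make sure you say explicitly, and you flag it yourself, is the compatibility of the Tits section for $\wM$ with the one for $\wG$, which follows from the restriction of the pinning and the agreement of the two length functions on $W_{\wM}(\wT)$.
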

\begin{proof}It suffices to prove the lemma assuming that $\wG$ is an adjoint group, and further that it is an
adjoint simple group, allowing us to give a proof case-by-case, since we now have to deal only with groups of type $A_n,D_n,E_6$. It is well-known in these cases that $W^\sigma$ is the Weyl group of $\wG^\sigma$, completing the proof of the lemma.  
\end{proof}

\begin{corollary} \label{t_0}
For a standard (relevant) Levi subgroup $\wM$ inside $\wG$, the element $t_0 \in \wT$ in Lemma \ref{Lemma 3} has the property that $t_0^\sigma/t_0 \in Z(\wM)$ for all $\sigma \in W_F$,  
hence we have a  commutative diagram of homomorphisms of $L$-groups where the vertical arrows are the natural embedding of ${}^LM(\C)$ in ${}^LG(\C)$, the top right arrow is conjugation by $t_0$, and the bottom rightarrow is conjugation by $w_{\wG}w_{\wM}$, both these making sense on $L$-groups by Lemma \ref{trivial}.
$$\xymatrixrowsep{1in}
\xymatrixcolsep{1in}
\xymatrix{  {}^LM(\C) \ar[r]^{c_{\wM}} \ar@{->}[d] 
&  {}^LM(\C)  \ar[r]^{t_0}     & \ar[d] {}^LM(\C)\\
 {}^LG(\C)\ar[r]^{c_{\wG}}  & {}^LG(\C)\ar[r]^{w_{\wG}w_{\wM}} & {}^LG(\C) .
}$$
\end{corollary}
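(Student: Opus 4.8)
The assertion to prove is Corollary \ref{t_0}: that the element $t_0 \in \wT$ from Lemma \ref{Lemma 3} satisfies $t_0^\sigma/t_0 \in Z(\wM)$ for all $\sigma \in W_F$, so that conjugation by $t_0$ extends to a morphism of $L$-groups by Lemma \ref{trivial}, and the resulting square commutes. The strategy is to take the defining equation $(\star)$ of $t_0$ from Lemma \ref{Lemma 3}, apply $\sigma \in W_F$ to it, and compare with the original equation; the discrepancy will be conjugation on $\wM$ by $t_0^\sigma/t_0$, and we must show this discrepancy is trivial, i.e. that $t_0^\sigma/t_0$ is central in $\wM$.

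First I would set up the $W_F$-equivariance of all the ingredients of $(\star)$. The pinning $(\wG,\wB,\wT,\{X_\alpha\})$ is $W_F$-invariant, and it restricts to a $W_F$-invariant pinning of the relevant Levi $\wM$; since $c_{\wG}$ (resp. $c_{\wM}$) is the Chevalley involution attached to a $W_F$-invariant pinning and lies in the center of the diagram automorphism group, it commutes with the $W_F$-action, i.e. $c_{\wG}(m^\sigma) = c_{\wG}(m)^\sigma$ and likewise for $c_{\wM}$. Next, by Lemma \ref{lemma6} I would \emph{choose} the representatives $w_{\wG}, w_{\wM} \in N_{\wG}(\wT)$ of the longest Weyl elements to be the $W_F$-invariant-up-to-$Z(\wG)$ lifts $n_{\wG}, n_{\wM}$ provided by that lemma; this is the point where the work of the previous section is used. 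With these choices, $w_{\wG}^\sigma = w_{\wG} z_1(\sigma)$ and $w_{\wM}^\sigma = w_{\wM} z_2(\sigma)$ with $z_i(\sigma) \in Z(\wG)$, and since $Z(\wG)$ is central the right-hand side of $(\star)$ is genuinely $W_F$-equivariant as a map $\wM \to \wG$: applying $\sigma$ to $w_{\wG}w_{\wM}(c_{\wG}(m))w_{\wM}^{-1}w_{\wG}^{-1}$ gives $w_{\wG}w_{\wM}(c_{\wG}(m^\sigma))w_{\wM}^{-1}w_{\wG}^{-1}$, the central factors cancelling.

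Now I would apply $\sigma$ to both sides of $(\star)$. The left side becomes $t_0^\sigma\, c_{\wM}(m^\sigma)\, (t_0^\sigma)^{-1}$; the right side, by the equivariance just established, becomes $w_{\wG}w_{\wM}(c_{\wG}(m^\sigma))w_{\wM}^{-1}w_{\wG}^{-1}$, which by $(\star)$ again (applied to $m^\sigma$) equals $t_0\, c_{\wM}(m^\sigma)\, t_0^{-1}$. Hence $t_0^\sigma\, c_{\wM}(n)\, (t_0^\sigma)^{-1} = t_0\, c_{\wM}(n)\, t_0^{-1}$ for all $n \in \wM$; setting $t' = t_0^{-1} t_0^\sigma$ this says $t'$ centralizes $c_{\wM}(\wM) = \wM$, so $t' \in Z(\wM) \cap \wT = Z(\wM)$, which is the claim $t_0^\sigma/t_0 \in Z(\wM)$. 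By Lemma \ref{trivial} (applied with $\wM$ in place of $\wG$, noting $Z(\wM) \supseteq$ the relevant central elements) conjugation by $t_0$ on $\wM$ then extends to an automorphism of ${}^LM(\C)$ that is the identity on $W_F$; and conjugation by $w_{\wG}w_{\wM}$ on $\wG$ extends to ${}^LG(\C)$ by Lemma \ref{lemma6} together with Lemma \ref{trivial}. The commutativity of the displayed square is then just $(\star)$ read as an identity of maps ${}^LM(\C) \to {}^LG(\C)$: both composites act on $\wM$ by $m \mapsto w_{\wG}w_{\wM}(c_{\wG}(m))w_{\wM}^{-1}w_{\wG}^{-1} = t_0 c_{\wM}(m) t_0^{-1}$, and both are the identity on $W_F$, so they agree.

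**Main obstacle.** The only genuine subtlety is bookkeeping the $Z(\wG)$-ambiguities: one must verify that choosing the lifts $w_{\wG}, w_{\wM}$ as in Lemma \ref{lemma6} really does make the right-hand side of $(\star)$ $W_F$-equivariant on the nose (the central cocycles $z_i(\sigma)$ must cancel, which they do because they are central and appear symmetrically in the conjugation), and that $Z(\wM)$ — not merely $Z(\wG)$ — is the correct target group, so that Lemma \ref{trivial} applies with $\wM$ as the ambient group. Everything else is a direct transport of $(\star)$ through the Galois action, so I expect the proof to be short.
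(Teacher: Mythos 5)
Your proposal is correct and follows the paper's argument essentially verbatim: both apply $\sigma \in W_F$ to the identity $(\star)$, both rely on Lemma \ref{lemma6} to choose lifts $w_{\wG}, w_{\wM}$ that are $\sigma$-invariant up to $Z(\wG)$ so the central cocycles cancel in the conjugation, and both conclude $t_0^\sigma/t_0 \in Z(\wM)$ by comparing with $(\star)$ applied to $m^\sigma$, after which commutativity is just $(\star)$ reread on $L$-groups. You supply more detail than the paper's rather terse "it is clear" step — in particular spelling out the $W_F$-equivariance of $c_{\wG}$ and $c_{\wM}$ and the cancellation of the central factors — but the route is the same.
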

  \begin{proof} Applying $\sigma \in W_F$ to the identity in Lemma \ref{Lemma 3}:
$$ t_0 c_{\wM}(m) t_0^{-1}  = w_{\wG} w_{\wM}( c_{\wG}(m))w_{\wM}^{-1}w_{\wG}^{-1} \quad \quad \forall m \in \wM,$$
and noting that the Weyl group elements $w_{\wG}$ and $w_{\wM}$ have lifts (by Lemma \ref{lemma6}) to $\wG$ which are $\sigma$-invariant
for all $\sigma \in W_F$, it is clear that $t_0^\sigma/t_0 \in Z(\wM)$ for all $\sigma \in W_F$. 

The commutativity of the  diagram now is just this identity from Lemma \ref{Lemma 3} interpreted for $L$-groups.\end{proof}

\begin{proposition}\label{cohomology}
Let $M$ be a quasi-split reductive group over a local field $F$, with $\wM$ the dual group,  $\wT$ the standard
maximal torus in $\wM$, and $\wZ$ the center of $\wM$. Then the natural map on cohomologies:
$H^1(W_F, \wZ) \rightarrow H^1(W_F,\wT)$ is injective.
\end{proposition}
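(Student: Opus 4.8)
The plan is to convert the injectivity into a surjectivity on invariants, and then to push the center into the derived group, where it becomes finite. Consider the short exact sequence of complex diagonalizable groups, carrying the $W_F$-action that comes from the $L$-group,
$$1 \longrightarrow \wZ \longrightarrow \wT \longrightarrow \wT/\wZ \longrightarrow 1,$$
in which $\wT/\wZ$ is the maximal torus of the adjoint group $\wM^{\rm ad}=\wM/\wZ$. The associated long exact cohomology sequence identifies the kernel of $H^1(W_F,\wZ)\to H^1(W_F,\wT)$ with the cokernel of $\wT^{W_F}\to(\wT/\wZ)^{W_F}$, so the proposition is equivalent to the assertion that this last map is surjective. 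Since $M$ is quasi-split it splits over a finite Galois extension $E/F$, so the $W_F$-action on all three groups factors through $\Gamma:=\Gal(E/F)$ and $H^0(W_F,-)=(-)^\Gamma$; hence it suffices to show that $\wT^{\Gamma}\to(\wT/\wZ)^{\Gamma}$ is surjective.

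Next I would push everything into the derived group. Set $\wT^{\der}:=\wT\cap\wM^{\der}$. Using the standard decomposition $\wM=Z(\wM)^0\cdot\wM^{\der}$ one gets $\wT=Z(\wM)^0\cdot\wT^{\der}$, and therefore $\wT/\wZ=\wT^{\der}/(\wT^{\der}\cap\wZ)=\wT^{\der}/Z(\wM^{\der})$; thus the map $\wT^{\der}\to\wT/\wZ$ is exactly the adjoint isogeny of the semisimple group $\wM^{\der}$, and in particular it is surjective with finite kernel $Z(\wM^{\der})$. Since it factors through $\wT^{\Gamma}\to(\wT/\wZ)^{\Gamma}$, it is now enough to prove that the induced map $(\wT^{\der})^{\Gamma}\to(\wT^{\der}/Z(\wM^{\der}))^{\Gamma}$ is surjective.

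For this I would use that the $W_F$-invariant pinning of $\wM$ restricts to a pinning of $\wM^{\der}$, so that $\Gamma$ acts on $\wM^{\der}$ by pinned automorphisms and in particular permutes the simple roots. Consequently $X^*(\wT^{\der}/Z(\wM^{\der}))=\Z\langle\text{simple roots of }\wM^{\der}\rangle$ is a permutation $\Gamma$-module, so its coinvariants are free, of rank equal to the number $d$ of $\Gamma$-orbits on the Dynkin diagram; hence $(\wT^{\der}/Z(\wM^{\der}))^{\Gamma}$ is a torus of dimension $d$. Since $\wT^{\der}\to\wT^{\der}/Z(\wM^{\der})$ is an isogeny, the two character lattices span the same $\Q$-vector space, so $(\wT^{\der})^{\Gamma}$ also has dimension $d$; and the map between them has finite kernel (contained in $Z(\wM^{\der})^{\Gamma}$), so the image of the identity component $((\wT^{\der})^{\Gamma})^0$ is a subtorus of dimension $d$ inside the connected $d$-dimensional torus $(\wT^{\der}/Z(\wM^{\der}))^{\Gamma}$, hence equals it. This yields the surjectivity, and with it the proposition.

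The step I expect to carry the weight is the identification of $(\wT^{\der}/Z(\wM^{\der}))^{\Gamma}$ as a connected torus of the expected dimension: this is precisely where quasi-splitness is used, since it is the $W_F$-invariant pinning that forces $\Gamma$ to act through \emph{diagram} automorphisms and hence makes the relevant character lattice a permutation module with torsion-free coinvariants. The preliminary reduction to the derived group is what makes this usable, as it replaces the kernel $\wZ$ — which in general has a positive-dimensional part, so $\wT\to\wT/\wZ$ is far from an isogeny — by the finite group $Z(\wM^{\der})$; by contrast, no appeal to Tate--Nakayama duality or to a case-by-case classification is needed.
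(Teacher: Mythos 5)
Your proof is correct. The key mechanism is the same as the paper's: the $W_F$-invariant pinning forces the Galois action to be by diagram automorphisms, so the character lattice of the adjoint torus is a permutation module on the simple roots, and hence its group of Galois-invariants is a connected torus. Where you differ from the paper is in two auxiliary maneuvers. First, the paper skips your detour through $\wM^{\der}$: it simply observes that $\wT/\wZ$ \emph{is} the maximal torus of $\wM^{\rm ad}$, so $X^*(\wT/\wZ) = \Z\langle\text{simple roots}\rangle$ directly, and therefore $(\wT/\wZ)^{W}\cong\prod_{\langle\alpha\rangle}\mathbb{G}_m$ is a connected torus; there is no need to first replace $\wT\to\wT/\wZ$ by the isogeny $\wT^{\der}\to\wT^{\der}/Z(\wM^{\der})$. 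Second, your finishing step is a dimension count (two $d$-dimensional tori related by a finite-kernel map), and it is precisely to make that dimension count available that you needed the isogeny; the paper instead rests (implicitly) on the fact that the cokernel of $\wT^{W}\to(\wT/\wZ)^{W}$ injects into the finite group $H^1(W,\wZ)$, so once $(\wT/\wZ)^{W}$ is a connected (hence divisible) torus the cokernel is automatically trivial. That observation would have let you skip the derived-group reduction entirely. Both finishes are valid; yours is a little longer but makes every step explicit, whereas the paper leaves the ``connected torus $\Rightarrow$ surjective'' step to the reader.
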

\begin{proof} It suffices to prove the proposition by replacing $W_F$ by its finite quotient $W$  through which 
it operates on $\wM$. By the long exact sequence associated to the short exact sequence of $W$-modules: 
$0\rightarrow \wZ\rightarrow \wT \rightarrow \wT/\wZ\rightarrow 0,$ it suffices to prove that $H^0(W,\wT/\wZ)$ is a connected torus. Note that there is a natural isomorphism
$$\wT/\wZ \cong \prod _{\alpha ~~ {\rm simple}} {\mathbb G}_m,$$
which is $W$-equivariant, where $W$ operates on $\prod _{\alpha ~~{\rm simple}} {\mathbb G}_m,$ by permuting 
co-ordinates. Therefore,
$$(\wT/\wZ)^W  \cong \prod _{\langle \alpha \rangle} {\mathbb G}_m,$$
where the product is taken over the orbits of $W$ on the set of simple roots. This completes the proof of the proposition. \end{proof}

\begin{corollary}\label{identity}
There is a choice of the  element $t_0 \in \wT$ appearing in Lemma \ref{Lemma 3} with,
$$ t_0 c_{\wM}(m) t_0^{-1}  = w_{\wG} w_{\wM}( c_{\wG}(m))w_{\wM}^{-1}w_{\wG}^{-1} \quad \quad \forall m \in \wM,$$
for which $t_0^\sigma = t_0$ for all $\sigma \in W_F$, and hence the top right arrow in the commutative diagram
in Corollary \ref{t_0} can be considered as identity at the level of Langlands parameters. 
\end{corollary}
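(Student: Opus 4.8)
The plan is to combine the result of Corollary \ref{t_0}, which says that the element $t_0 \in \wT$ from Lemma \ref{Lemma 3} satisfies $t_0^\sigma/t_0 \in \wZ = Z(\wM)$ for all $\sigma \in W_F$, with the cohomological vanishing supplied by Proposition \ref{cohomology}. The key observation is that the assignment $\sigma \mapsto t_0^\sigma/t_0$ is a $1$-cocycle on $W_F$ valued in $\wZ$ (it is a cocycle valued in $\wT$ by the usual formula for $t_0 \mapsto t_0^\sigma/t_0$, and by Corollary \ref{t_0} it lands in the subgroup $\wZ$). So we get a class $[z] \in H^1(W_F, \wZ)$ which measures the failure of $t_0$ to be $W_F$-fixed.

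First I would check that this class $[z]$ is trivial. By Proposition \ref{cohomology}, the map $H^1(W_F, \wZ) \rightarrow H^1(W_F, \wT)$ is injective, so it suffices to show that the image of $[z]$ in $H^1(W_F, \wT)$ vanishes. But by construction $z(\sigma) = t_0^\sigma/t_0$ is a coboundary in $\wT$ — it is exactly the coboundary of the $0$-cochain $t_0 \in \wT$. Hence $[z] = 0$ in $H^1(W_F, \wT)$, and therefore $[z] = 0$ in $H^1(W_F, \wZ)$ by injectivity. This means there exists $z_0 \in \wZ = Z(\wM)$ with $z(\sigma) = z_0^\sigma/z_0$ for all $\sigma \in W_F$.

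Next I would replace $t_0$ by $t_0' = z_0^{-1} t_0$. Since $z_0 \in Z(\wM)$, conjugation by $z_0^{-1}$ is trivial on $\wM$, so $t_0'$ still satisfies the identity $(\star)$ of Lemma \ref{Lemma 3}: $t_0' c_{\wM}(m) (t_0')^{-1} = t_0 c_{\wM}(m) t_0^{-1} = w_{\wG} w_{\wM}(c_{\wG}(m)) w_{\wM}^{-1} w_{\wG}^{-1}$ for all $m \in \wM$. Moreover $(t_0')^\sigma = z_0^{-\sigma} t_0^\sigma = z_0^{-\sigma}\, z(\sigma)\, t_0 = z_0^{-\sigma}\, (z_0^\sigma z_0^{-1})\, t_0 = z_0^{-1} t_0 = t_0'$, so $t_0'$ is $W_F$-fixed. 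Renaming $t_0'$ back to $t_0$ gives the statement. Finally, since $t_0 \in \wT^{W_F}$, conjugation by $t_0$ is an automorphism of ${}^LG(\C) = \wG \rtimes W_F$ which is the identity on $W_F$ and is inner by an element of $\wT$ fixed by the Galois action; such a conjugation induces the identity at the level of Langlands parameters (two parameters differing by $\wG$-conjugation are equivalent, and here the conjugating element is even $W_F$-fixed), so the top right arrow in the diagram of Corollary \ref{t_0} may be taken to be the identity.

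I do not anticipate a serious obstacle here: the entire argument is a standard "the obstruction lives in a cohomology group that we have just shown is zero, so we can adjust the representative" maneuver. The one point that requires a little care — and is the closest thing to a subtlety — is verifying that $\sigma \mapsto t_0^\sigma/t_0$ is genuinely a cocycle (as opposed to merely a function) and that the coboundary relation $z(\sigma) = z_0^\sigma/z_0$ produced by triviality in $H^1(W_F,\wZ)$ is the right one to cancel it; both are immediate from the cocycle/coboundary formulas once one writes them down, using that $\wZ$ is central so multiplication is unambiguous. Everything else is bookkeeping with the commutative diagram already set up in Corollary \ref{t_0}.
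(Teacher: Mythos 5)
Your proposal is correct and follows essentially the same route as the paper's proof: both observe that $\sigma\mapsto t_0^\sigma/t_0$ is a $\wZ$-valued cocycle that is a coboundary in $\wT$ (manifestly, being $\partial t_0$), invoke Proposition \ref{cohomology} to conclude it is already a coboundary in $\wZ$, and adjust $t_0$ by the resulting central element. The only differences are cosmetic — the paper writes the coboundary as $z_0/z_0^\sigma$ and multiplies by $z_0$, while you write it as $z_0^\sigma/z_0$ and multiply by $z_0^{-1}$ — and your explicit remark that centrality of $z_0$ in $\wM$ preserves the identity $(\star)$ makes a point the paper leaves implicit.
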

\begin{proof}By Corollary \ref{t_0} we have $t_0^\sigma/t_0 \in \wZ$ 
for all $\sigma$ in $W_F$, defining a 1-cocycle
of $W_F$ in $\wZ$ which is a co-boundary in $\wT$, hence is a co-boundary in $\wZ$ by Proposition \ref{cohomology},
i.e. there is a $z_0 \in \wZ$ with $t_0^\sigma/t_0 = z_0/z_0^\sigma$ for all $\sigma \in W_F$. Therefore,
$(z_0t_0)^\sigma= (z_0t_0)$   for all $\sigma \in W_F$. One can replace $t_0$ by $z_0t_0$ in the identity:
$$ t_0 c_{\wM}(m) t_0^{-1}  = w_{\wG} w_{\wM}( c_{\wG}(m))w_{\wM}^{-1}w_{\wG}^{-1} \quad \quad \forall m \in \wM,$$
and for such a $t_0$ now with $t_0^\sigma = t_0$ for all $\sigma \in W_F$, the top right arrow in the commutative diagram
above is clearly identity at the level of Langlands parameters (by the definition of equivalence of Langlands parameters). \end{proof}

We finally have the following proposition  we have been after in this section. 

\begin{proposition} \label{prop1}
Assume that an irreducible admissible representation $\pi$ of a reductive group $G=G(F)$ is obtained as a Langlands quotient
$\Ind_P^G\sigma \rightarrow \pi$ 
with $\sigma$ 
an irreducible representation of a Levi subgroup $M$ of $P$ which is tempered up to twisting by a 
character of $M$, and that  the exponents of the representation $\sigma$ of $M$ 
restricted to the maximal split torus in the centre of $M$ are strictly positive.
Then if Conjecture \ref{conj2} holds good for the representation $\sigma$ of $M$, it does also for the representation $\pi$ of $G$.
\end{proposition}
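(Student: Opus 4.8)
The plan is to play $\pi$ and $\pi^\vee$ off against $\sigma$ and $\sigma^\vee$ on $M$, and then feed in Conjecture \ref{conj2} for $\sigma$. Fix the Whittaker datum $\psi:N\to\C^\times$ on $G$ as in Section \ref{Rodier}, together with the character $\psi_M$ on $M\cap N$ produced by Rodier's recipe, and write $(\varphi,\mu)$ for the Langlands--Vogan parameter of $\pi$ relative to $\psi$ and $(\varphi_M,\mu_M)$ for that of $\sigma$ relative to $\psi_M$. First I would record what was recalled at the start of Section \ref{Rodier}: since $\pi$ is the Langlands quotient of $\Ind_P^G\sigma$ with $\sigma$ tempered up to a twist and with strictly positive exponents, $\varphi$ is the image of $\varphi_M$ under ${}^LM\hookrightarrow{}^LG$, and moreover $Z_{\varphi_M}(\wM)=Z_{\varphi}(\wG)$, so that $\pi_0(Z_{\varphi_M})=\pi_0(Z_\varphi)$ and $\mu_M=\mu$ under this identification.

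Next I would analyse $\pi^\vee$. By Lemma \ref{Lemma 2}, $\pi^\vee$ is the Langlands quotient of $\Ind_{P'}^G(\sigma'^\vee)$, where $P'=w_Gw_M(P^-)$ is a \emph{standard} parabolic with Levi $M'=w_Gw_M(M)$, where $\sigma'=\mathrm{Int}(w_Gw_M)\,\sigma$, and where $\Ind_{P'}^G(\sigma'^\vee)$ is a standard module with strictly positive exponents. Applying the same Langlands-quotient compatibility to $\pi^\vee$, its parameter relative to $\psi$ is the image under ${}^LM'\hookrightarrow{}^LG$ of the parameter of $\sigma'^\vee$ relative to $\psi_{M'}$, again with the component groups identified. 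Now $\sigma'^\vee=\mathrm{Int}(w_Gw_M)\,\sigma^\vee$, and from Rodier's formula $\psi_M(n)=\psi(w_Gw_Mnw_M^{-1}w_G^{-1})$ one checks that $\mathrm{Int}(w_Gw_M)$ carries the Whittaker datum $\psi_M$ of $M$ to $\psi_{M'}$ of $M'$, hence intertwines the two enhanced local Langlands correspondences; on the dual side this conjugation is implemented by $n_{\wG}n_{\wM}$ (the $W_F$-equivariant lifts of Lemma \ref{lemma6}), and conjugation by $n_{\wG}n_{\wM}$ carries ${}^LM'$ back onto ${}^LM$ inside ${}^LG$, so it does nothing to ${}^LG$-conjugacy classes of parameters. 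The conclusion is that the parameter of $\pi^\vee$ relative to $\psi$ is the image under ${}^LM\hookrightarrow{}^LG$ of the parameter of $\sigma^\vee$ relative to $\psi_M$, with the component groups identified as above.

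Now I would apply Conjecture \ref{conj2} to $\sigma$ on $M$: the parameter of $\sigma^\vee$ relative to $\psi_M$ is $(c_{\wM}\circ\varphi_M,\ (c_{\wM}\circ\mu_M)^\vee\otimes\eta_{-1}^M)$, where $\eta_{-1}^M$ is the character of $\pi_0(Z_{\varphi_M})$ attached to the class in $M^{\mathrm{ad}}(F)$ acting by $-1$ on all simple root spaces of $M$. Pushing this into ${}^LG$: by Lemma \ref{Lemma 3} and Corollary \ref{identity} there is $t_0\in\wT$ with $t_0^\sigma=t_0$ such that $t_0\,c_{\wM}(m)\,t_0^{-1}=n_{\wG}n_{\wM}\,c_{\wG}(m)\,n_{\wM}^{-1}n_{\wG}^{-1}$ for all $m\in\wM$, so the image of $c_{\wM}\circ\varphi_M$ in ${}^LG$ is ${}^LG$-conjugate to $c_{\wG}\circ\varphi$ (conjugation by $t_0$ being trivial on parameters, conjugation by $n_{\wG}n_{\wM}$ being inner in ${}^LG$), and the same conjugation identifies $\pi_0(Z_{c_{\wM}\varphi_M})$ with $\pi_0(Z_{c_{\wG}\varphi})$ carrying $(c_{\wM}\circ\mu_M)^\vee$ to $(c_{\wG}\circ\mu)^\vee$. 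For the base-point character I would use the commutative square relating $G^{\mathrm{ad}}(F)/G(F)\to H^1(F,Z(G))\to\pi_0(Z_\varphi)^\vee$ to its $M$-analogue (the Remark following the construction of $g\mapsto\eta_g$): the class $g_0\in G^{\mathrm{ad}}(F)$ acting by $-1$ on all simple root spaces of $G$ maps, under $G^{\mathrm{ad}}(F)/G(F)\to M^{\mathrm{ad}}(F)/M(F)$, to the class defining $\eta_{-1}^M$, whence $\eta_{-1}^G=\eta_{-1}^M$ under the identification $\pi_0(Z_\varphi)=\pi_0(Z_{\varphi_M})$ (which is an actual equality here because the exponents are strictly positive). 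Assembling these equalities gives that the parameter of $\pi^\vee$ is $(c_{\wG}\circ\varphi,\ (c_{\wG}\circ\mu)^\vee\otimes\eta_{-1}^G)$, which is Conjecture \ref{conj2} for $\pi$.

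I expect the main obstacle to be the bookkeeping of the Whittaker data and of the base-point character through the two conjugations: concretely, establishing cleanly that $\mathrm{Int}(w_Gw_M)$ carries the pair $(M,\psi_M)$ to $(M',\psi_{M'})$ — a statement that is "morally Rodier's recipe" but must be made precise given the $T(F)$-indeterminacy of $w_G,w_M$ and the fact that $w_G^2\in T$ — and that the character $\eta_{-1}$ computed inside $M$ really coincides with the one computed inside $G$ after the $c_{\wM}$-versus-$c_{\wG}$ identification. The remaining ingredients — replacing $c_{\wM}$ by $c_{\wG}$, and the equality of component groups — are handled directly by Lemma \ref{Lemma 3}, Corollary \ref{identity}, and the strict positivity of the exponents, so those steps should be routine once the framework is in place.
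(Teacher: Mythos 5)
Your argument is correct and follows essentially the same route as the paper's own proof: Lemma \ref{Lemma 2} together with the discussion opening Section \ref{Rodier} to identify the parameter of $\pi^\vee$ with that of $\sigma^\vee$ transported by $w_Gw_M$, Lemma \ref{lemma6} to kill that conjugation on ${}^LG$-equivalence classes, Corollaries \ref{t_0} and \ref{identity} to convert $c_{\wM}$ into $c_{\wG}$, and Remark 2 to match the $\eta_{-1}$-twist between $M$ and $G$. The only difference is that you spell out the Whittaker-datum bookkeeping (that $\mathrm{Int}(w_Gw_M)$ carries $(M,\psi_M)$ to $(M',\psi_{M'})$ up to the allowed $T(F)$-indeterminacy) which the paper's terse four-sentence proof leaves implicit.
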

\begin{proof}By Lemma \ref {Lemma 2} and the discussion in the beginning of the section \ref{Rodier}, the enhanced Langlands parameter of $\pi^\vee$ is the same as that of $\sigma^\vee$ conjugated by $w_Gw_M$ in ${}^LG$. By Lemma \ref{lemma6}, the Weyl group element $w_Gw_M$ has a
lift in $\wG$ which is $W_F$-invariant, and therefore, conjugation by $w_Gw_M$ is the identity map on parameters for $G$. 

Since we are assuming that
Conjecture \ref{conj2} is true for tempered representations, the enhanced Langlands parameter of $\sigma^\vee$ with values in ${}^LM$ arises by using the Chevalley involution on ${}^LM$. We are now done regarding the assertion on the Langlands parameter of $\pi^\vee$ 
by the commutative diagram of $L$-groups contained
in Corollary \ref{t_0} combined with Corollary \ref{identity}. Remark 2 fixes the twisting by the character $\eta_{-}$ 
which appears in the conjecture. \end{proof}

\begin{remark}
Most of the difficulties of this section would not be there if our groups were innerforms of split groups instead of quasi-split groups! 
\end{remark}

\begin{remark} 
If $\Ind_P^G\sigma = \pi$, with $\sigma$ a unitary tempered representation of $M(F)$, is irreducible, then again the Langlands parameter of $\sigma$ with values in 
${}^LM$ when considered inside ${}^LG$ through the natural embedding of ${}^LM$ inside ${}^LG$ is the Langlands parameter of $\pi$; further, for $\pi$ to be irreducible, presumably their component groups are isomorphic too. Therefore, once again, Conjecture \ref{conj2} for the representation $\sigma$ of $M(F)$ implies Conjecture \ref{conj2}
for the representation $\pi$ of $G(F)$. 
\end{remark}

\section{Comparing with the work of Adams}

Jeff Adams in \cite {adams} has constructed the contragredient of any representation of $G(\R)$ in terms of 
what he calls the Chevalley involution $C$: an automorphism of $G(\R)$ of order 2 which leaves a fundamental
Cartan subgroup $H_f$ invariant and acts by $h\rightarrow h^{-1}$ on $H_f$. (A fundamental Cartan subgroup 
of $G(\R)$ is a Cartan subgroup of $G$ defined over $\R$  which is of minimal split rank; such Cartan subgroups are conjugate under $G(\R)$.) One of the main theorems of Adams is that $\pi^C \cong \pi^\vee$ for any irreducible representation $\pi$ of $G(\R)$. He uses this to prove that 
every irreducible representation of $G(\R)$ is selfdual if and only if  $-1$ belongs to the Weyl group of $H_f$ in $G(\R)$ (in particular $-1$ belongs to the Weyl group of $G(\C)$).

Assuming $G$ to be a quasi-split reductive group over $\R$, with $(B,T)$ a pair of a Borel subgroup and a 
maximal torus in it, 
 our recipe for constructing the contragredient is in terms of the duality involution $\iota_G$ of $G(\R)$ 
which has the form $\iota_G=\iota_{-} \cdot c_G$ where $\iota_{-}$ in an element in $T^{\ad}(\R)$ which acts by $-1$ 
on all simple root spaces of $T$ in $B$, and $c_G=c_{G,\Pa}$ is the Chevalley 
automorphism of $\Pa=(G,B,T,\{X_\alpha\})$. In this section we 
 check that the duality involution constructed out of considerations with the Whittaker model
is the same as 
that of Adams constructed using fundamental tori.  (This gives credence to our 
recipe on the duality involution $\iota_G$!) Much of the section
uses the `principal $\SL_2$' arising from a regular unipotent element by the Jacobson-Morozov theorem, and transporting 
an analysis inside this principal $\SL_2$  
 to the whole of $G$. Along the way, we also give a proof of 
the theorem of Adams on the existence of Chevalley involution for any reductive group over $\R$.

The following well-known lemma has the effect of saying that the contragredient which appears in the 
character of component group in Conjecture 2 plays no role for real groups.

\begin{lemma} Let $G$ be a connected real reductive group with a Langlands parameter $\varphi: W_{\R} \rightarrow {}^LG.$ 
Then, the group $\pi_0(Z_\varphi)$ is an elementary abelian 2-group.
\end{lemma}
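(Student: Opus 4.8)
The plan is to exploit the very special structure of the Weil group $W_{\R}$, namely that it is an extension $1 \to \C^\times \to W_{\R} \to \Gal(\C/\R) \to 1$, with $W_{\R} = \C^\times \cup j\C^\times$ where $j^2 = -1 \in \C^\times$ and $j z j^{-1} = \bar z$. First I would reduce to understanding the centralizer $Z_\varphi$ of $\varphi$ in $\wG(\C)$ and show every connected component of $Z_\varphi$ contains an element of order $2$ modulo $Z_\varphi^0$; equivalently, that $\pi_0(Z_\varphi)$ is killed by $2$ and abelian. The abelianness plus $2$-torsion together are exactly the assertion.

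The key mechanism is this: the restriction $\varphi|_{\C^\times}$ factors through the (connected, abelian) group $\C^\times$, so its image lies in a torus $\wS$ of $\wG$ (take a maximal torus containing a maximal torus of the connected centralizer of the image, or simply note the image is contained in a maximal torus since $\C^\times$ is connected abelian and divisible, hence its image is a connected abelian subgroup consisting of semisimple elements after the usual Langlands normalization). Then $Z_\varphi = Z_{\wG}(\varphi(\C^\times))^{\varphi(j)}$, i.e. the fixed points of the involution $\mathrm{Int}(\varphi(j))$ acting on $H := Z_{\wG}(\varphi(\C^\times))$, a connected reductive group (connected because it is the centralizer of a torus in a connected reductive group). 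So I am reduced to: if $H$ is connected reductive over $\C$ and $\theta = \mathrm{Int}(h_0)$ is an involution of $H$ (note $\varphi(j)^2 = \varphi(-1) \in \varphi(\C^\times) \subset Z(H)$, so $\theta^2 = \mathrm{Int}(\varphi(-1))$ is trivial on $H$), then $\pi_0(H^\theta)$ is an elementary abelian $2$-group. This is a standard fact about fixed points of an involution on a connected reductive group: $H^\theta$ has identity component $(H^\theta)^0$ reductive, and $\pi_0(H^\theta)$ embeds into $\pi_0$ of the fixed points on a maximal $\theta$-stable torus, which is a quotient of a subgroup of the $2$-torsion of that torus; more cleanly, one invokes that for $\theta$ an involution, $\pi_0(H^\theta)$ is an elementary abelian $2$-group — this appears e.g. in the work on real groups and symmetric spaces.

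More self-containedly, I would argue: choose a $\theta$-stable maximal torus $\wT_H$ of $H$ containing $\varphi(\C^\times)$; since $\theta$ is inner it acts on $\wT_H$ by an element $w$ of the Weyl group composed possibly with an inner-by-torus automorphism, but being an involution, $w^2 = 1$, and one shows each component of $H^\theta$ meets $N_{H}(\wT_H)^\theta$, reducing to components of $(\wT_H)^\theta$ together with $W_H^\theta$-type data, all of which are $2$-torsion and commute because everything lives over the fixed torus. The cleanest route is to cite the structure theory: for an involution $\theta$ of a connected reductive group over an algebraically closed field of characteristic $0$, $H^\theta$ is reductive and $\pi_0(H^\theta) \cong (\wT_H^{-\theta} \cap \wT_H^{\theta,0})$-type object which is elementary abelian of exponent $2$. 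Then pulling this back through $Z_\varphi = H^\theta$ gives the lemma.

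The main obstacle is making precise and justifying the claim that $\pi_0$ of the fixed-point group of an involution on a connected reductive group is elementary abelian $2$-torsion — this is genuinely a (known) structural input, and the honest options are either to cite it (Adams–Vogan, or Steinberg's endomorphism theory, or the standard references on $\theta$-groups) or to prove it via the $\theta$-stable maximal torus argument, carefully handling that $\theta$ need not fix a Borel so $(H^\theta)^0$ may be a proper-rank reductive group and the component group genuinely nontrivial (as for $\OO_n \subset \GL_n$, which is exactly the $2$-torsion one expects). I would present the $\theta$-stable torus reduction explicitly and then quote the elementary-abelian conclusion, since grinding through the full classification of involutions case-by-case is exactly the kind of routine computation to avoid here.
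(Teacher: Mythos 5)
Your reduction coincides exactly with the paper's: $Z_\varphi$ is identified with the fixed points of the involution $\varphi(j)$ on the connected reductive group $H = Z_{\wG}(\varphi(\C^\times))$, which reduces the lemma to the structural fact that $\pi_0(H^\theta)$ is an elementary abelian $2$-group for any involution $\theta$ of a connected complex reductive group $H$. Where you diverge is in how you dispose of that fact. You either propose to cite the $\theta$-groups / Steinberg literature, or sketch a $\theta$-stable maximal torus argument that you acknowledge is incomplete. The paper instead gives a short, self-contained derivation: by Cartan's correspondence, $\theta$ is the Cartan involution of a real form $H_\theta$ of $H$, $H^\theta$ is the complexification of the maximal compact subgroup $H_\theta(\R)^\theta$, hence $\pi_0(H^\theta) = \pi_0(H_\theta(\R)^\theta) = \pi_0(H_\theta(\R))$, and the latter is an elementary abelian $2$-group by Matsumoto's theorem (Borel--Tits, corollaire 14.5). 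That route avoids the work your torus sketch would require (showing the component group embeds into $2$-torsion of a torus, not merely that it is a $2$-group) while invoking only one classical citation.

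One point in your write-up is not quite right: you call $\theta = \mathrm{Int}(\varphi(j))$ an \emph{inner} automorphism of $H$. Since $\varphi(j)$ lies in $\wG\rtimes W_\R$ and projects to $j\in W_\R$, conjugation by $\varphi(j)$ is an inner automorphism of $\wG$ twisted by the Galois action of $j$, which can be a nontrivial diagram automorphism when $G$ is quasi-split but not split. This does not undermine the conclusion --- the structural fact holds for arbitrary involutions --- but your torus sketch, which leans on $\theta$ acting through a Weyl group element, would have to accommodate the outer part. The Cartan-involution argument in the paper is insensitive to whether $\theta$ is inner or outer, which is another reason it is the cleaner choice here.
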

\begin{proof} From the structure of $W_\R = \C^\times \cdot \langle j \rangle$, with $j^2=-1$, 
the group $Z_\varphi$ can be considered as the fixed points of the involution which is $\varphi(j)$,
 on the centralizer of 
$\varphi(\C^\times)$ in $\widehat{G}$. The group $\varphi(\C^\times)$ being connected, 
abelian, and consisting of semisimple elements, its centralizer in $\widehat{G}$ is a connected reductive group. It 
suffices then to prove the following lemma. \end{proof}
\begin{lemma} Let $H$ be a connected reductive group over $\C$, and $j$ an involution on $H$ with $H^j$ its fixed points. 
Then the group of 
connected components  $\pi_0(H^j)$ is an elementary abelian 2-group.
\end{lemma}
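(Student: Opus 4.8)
The plan is to reduce to the case of semisimple simply connected $H$ and then argue via a concrete description of the involution. First I would note that the statement only concerns $\pi_0(H^j)$, so I may pass to quotients and covers freely as long as I control the effect on component groups. Replacing $H$ by $H/Z(H)^\circ$ does not change $\pi_0$ of anything essential here, so I would reduce to $H$ with finite center; then I would pass to the simply connected cover $H_{\scon} \to H_{\der}$ and handle the central torus part separately (on a torus $S$, an involution $j$ has $S^j$ with $\pi_0(S^j)$ killed by $2$, since the component group injects into $S[2]$ via $s \mapsto s \cdot j(s)^{-1}$ on each coset — actually more directly, $(S^j)^\circ$ has finite index and the quotient embeds in $H^1(\langle j\rangle, S) $ which is $2$-torsion). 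The crux is therefore the semisimple simply connected case.

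For $H$ semisimple and simply connected with an involution $j$, I would use the structure theory of symmetric spaces: up to conjugacy $j$ preserves a maximal torus and a Borel, and the fixed-point group $H^j$ is reductive with $(H^j)^\circ$ a connected reductive group, while $\pi_0(H^j) = H^j/(H^j)^\circ$. The key classical fact I would invoke is that for a simply connected group, the fixed-point subgroup of an involution need not be connected, but its component group is always an elementary abelian $2$-group — this follows because $\pi_0(H^j)$ embeds into $H^1(\langle j \rangle, Z((H^j)^\circ))$-type data, or more transparently, because $H^j$ acts on $H/(H^j)^\circ$ and one can exhibit each component as represented by an element $t$ in a $j$-stable maximal torus with $t \cdot j(t) \in (H^j)^\circ$, forcing $t^2 \in (H^j)^\circ$. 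Concretely: given $h \in H^j$, by connectedness of $(H^j)^\circ$-conjugacy one reduces $h$ into $N_H(T^j_0)$ for a fixed maximal $j$-split or $j$-fixed torus, and the component is detected by the image in a finite $2$-torsion group coming from the relevant Weyl/cocharacter lattice quotient.

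I expect the main obstacle to be giving a clean, uniform argument that avoids case-by-case checking over the classification of involutions. The cleanest route is probably cohomological: set $\Gamma = \mathbb{Z}/2 = \langle j \rangle$ acting on $H(\C)$; then $H^j = H^\Gamma$ and there is an exact-sequence/obstruction argument showing $\pi_0(H^\Gamma)$ is controlled by $H^1(\Gamma, \pi_1(H_{\der}^{\text{something}}))$ together with $H^1(\Gamma, \text{torus})$, all of which are $2$-torsion because $\Gamma$ has order $2$ (Tate cohomology of a group of order $2$ with coefficients in any abelian group is $2$-torsion). The honest technical work is identifying $\pi_0(H^j)$ with such an $H^1$; I would do this by choosing a $j$-stable maximal torus $T$ (which exists) and a $j$-stable Borel is \emph{not} generally available, so instead I would work with $(H^j)^\circ \supseteq T^j_0$, write an arbitrary element of $H^j$ via Bruhat-type or Cartan-type decomposition adapted to $j$, and read off that the discrete invariant lies in the $2$-torsion group $\widehat{T}^{\,-j}/(1-j)\widehat{T}$ or its dual. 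Once that identification is in place, the elementary-abelian-$2$-group conclusion is immediate from $|\Gamma| = 2$.
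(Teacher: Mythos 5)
Your proposal and the paper's proof go by genuinely different routes, and your route as written has a gap that does not obviously close. The paper's argument is much shorter and avoids all the lattice/cover bookkeeping: by \'Elie Cartan, the involution $j$ on the complex reductive group $H$ determines a real form $H_j$ on which $j$ acts as a Cartan involution; then $H^j$ is the complexification of the maximal compact subgroup $H_j(\R)^j$ of $H_j(\R)$, so $\pi_0(H^j)=\pi_0(H_j(\R)^j)=\pi_0(H_j(\R))$, and the component group of (the real points of) a connected real reductive group is an elementary abelian $2$-group by Matsumoto's theorem, cited from Borel--Tits, corollaire 14.5. That single move replaces your entire reduction scheme.

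As for your cohomological approach, the core difficulty is the last step, which you don't address. Running your reduction through a $z$-extension $1\to\mu\to\tilde H\to H\to 1$ with $\tilde H = H_{\scon}\times Z(H)^\circ$ gives, at best, a short exact sequence presenting $\pi_0(H^j)$ as an extension of a subgroup of $H^1(\Z/2,\mu)$ (which is $2$-torsion abelian) by a quotient of $\pi_0(\tilde H^j)$ (also elementary abelian $2$, once you know the torus part). But an extension of one elementary abelian $2$-group by another is merely a $2$-group --- it could be $\Z/4$ or dihedral --- so ``all the pieces are $2$-torsion because $|\Gamma|=2$'' does not by itself deliver the conclusion. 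You would need an extra argument (e.g.\ exhibiting a single embedding $\pi_0(H^j)\hookrightarrow H^1(\Z/2,A)$ for one abelian $A$, or showing the extension is split and central), and nothing in your sketch supplies it.

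Two smaller inaccuracies: you write that for $H$ simply connected the fixed-point group of an involution ``need not be connected, but its component group is always an elementary abelian $2$-group''; in fact Steinberg's theorem says it \emph{is} connected, which is stronger and would simplify your plan --- misquoting it in the weaker form suggests the structure of the reduction hasn't been pinned down. And for a torus $S$, the assertion that $\pi_0(S^j)$ ``embeds in $H^1(\langle j\rangle,S)$'' is not correct as stated: when $j$ is trivial, $H^1(\Z/2,S)=S[2]$ can be large while $\pi_0(S^j)=\pi_0(S)=1$. The correct statement for tori --- that $\pi_0(S^j)$ is elementary abelian $2$ --- does hold, but it follows from decomposing the cocharacter lattice into the three indecomposable $\Z[\Z/2]$-lattices (trivial, sign, regular), not from the $H^1$ map you propose. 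In short: the route might be salvageable with substantially more care, but as written it proves less than claimed, and the paper's Cartan/Matsumoto argument is both correct and far more economical.
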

\begin{proof}According to \'Elie Cartan, the involution $j$ gives rise to a real structure on $H$, i.e., there is a connected 
real reductive group $H_j$ (with $H$ its complexification) on which $j$ operates, and operates as a Cartan involution on $H_j(\R)$, thus  $H_{j}(\R)^j$ 
is a maximal compact subgroup of $H_j(\R)$, and $H^j$ is the complexification of the compact group $H_{j}(\R)^j$. 
Therefore $\pi_0(H_j(\R))= \pi_0(H_j(\R)^j)= \pi_0(H^j).$ Finally, it suffices to note that the 
group of connected components of an algebraically connected real reductive group is an elementary abelian 2-group, cf. \cite{BT} corollaire 14.5, who attribute the result to Matsumoto. 
\end{proof}

Here is the main result of this section proving the existence of Chevalley involution (in the sense of Adams) for all real reductive groups, and at the same time identifying it to our duality involution $\iota_G$ when $G$ is quasi-split over $\R$.

\begin{proposition}
Let $H_f(\R)$ be a fundamental maximal torus in a real reductive group $G(\R)$. Then there exists an involutive 
automorphism
$C$ of $G(\R)$ which acts as $t\rightarrow t^{-1}$ on $H_f$. Further, if $G(\R)$ is quasi-split with 
a Borel subgroup $B$ of $G(\R)$, and $T$ as a maximal torus in $B$, then we can choose $H_f$ 
as well as a pinning  $(G,B,T,\{X_\alpha\})$ on $G(\R)$ 
so that $C = \iota_G= \iota_{-}\cdot c_G$. 
\end{proposition}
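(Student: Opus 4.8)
The strategy is to construct the Chevalley involution $C$ by first producing it on a principal $\SL_2$ and then extending to all of $G(\R)$, using the Jacobson--Morozov machinery recalled in Section 2. First I would fix a regular unipotent element $u$ in the quasi-split group $G(\R)$ lying in the fixed Borel $B$, and let $\phi:\SL_2\to G$ be the principal homomorphism defined over $\R$ with $\phi$ carrying the diagonal torus of $\SL_2$ into $T$; by the uniqueness statement in Section 2 this $\phi$ is canonical given the pinning $\Pa=(G,B,T,\{X_\alpha\})$. The image $\phi(\SL_2)$ contains a fundamental Cartan: the point is that the standard elliptic torus of $\SL_2(\R)$ (the $\SO(2)$ inside $\SL_2(\R)$, i.e. the circle) maps to an elliptic element, and by a rank/split-rank count one checks that the centralizer construction forces the resulting torus $H_f := Z_G(\phi(\text{elliptic torus}))^\circ$ (suitably interpreted) to be of minimal split rank, hence fundamental; conjugacy of fundamental Cartans under $G(\R)$ makes this harmless. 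Then I would define $C$ on $\phi(\SL_2)$ by transporting the automorphism $g\mapsto w_0\, {}^tg^{-1}\, w_0^{-1}$ of $\SL_2$ (with $w_0$ the standard Weyl element), which visibly inverts the elliptic torus, and extend it to $G(\R)$ using that $c_{G,\Pa}$ already exists (constructed in Section 3) and agrees with this prescription on the principal $\SL_2$; concretely $C$ will be $\iota_{-}\cdot c_G$ where $\iota_-\in T^{\ad}(\R)$ is the sign element, because one computes that $\iota_-\cdot c_G$ restricted to $\phi(\SL_2)$ is exactly the inverting automorphism, and an automorphism of $G(\R)$ fixing a pinning is determined by its restriction to a principal $\SL_2$ together with its action on $Z(G)$.

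Next I would verify the two required properties of $C=\iota_-\cdot c_G$. For \emph{involutivity}: $c_G$ is involutive by construction in Section 3, $\iota_-$ has order dividing $2$ since it acts by $-1$ on simple root spaces, and the two commute (this commutation is built into the definition of the duality involution), so $C^2=1$. For the \emph{action on $H_f$}: I claim $C$ acts by $h\mapsto h^{-1}$ on the fundamental torus $H_f$ chosen above. On the elliptic part coming from $\phi(\SL_2)$ this is immediate from the $\SL_2$ computation. The general element of $H_f$ is a product of its split part and its elliptic part; on the split part one uses that $c_G$ acts as $t\mapsto w_G(t^{-1})$ on $T$ and that $\iota_-$ (being inner conjugation by an element of $T^{\ad}$) acts trivially on $T$, combined with the fact that for the \emph{fundamental} torus the relevant Weyl element restricts to $-1$ — this is precisely where the special geometry of $H_f$ (not an arbitrary torus) is used. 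I would phrase this by comparing $H_f$ and $T$ via an explicit Cayley transform inside the principal $\SL_2$, so that the action of $C$ on $H_f$ is read off from its known action on $T$.

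Finally, to match with Adams: Adams' Chevalley involution is characterized (up to $G(\R)$-conjugacy) as an order-$2$ automorphism inverting a fundamental Cartan, so once I have produced \emph{an} automorphism $C$ of $G(\R)$ with $C^2=1$ and $C|_{H_f}=(\,\cdot\,)^{-1}$, it agrees with Adams' up to conjugacy, and simultaneously it is by construction equal to $\iota_G=\iota_-\cdot c_G$; for the existence half of the proposition (no quasi-split hypothesis) the same $\SL_2$-transport argument works since a regular unipotent need not exist but a principal $\SL_2$ adapted to a fundamental Cartan still can be built from the Jacobson--Morozov theorem applied after complexifying and descending, using \'Elie Cartan's correspondence between involutions and real forms as in the preceding lemmas. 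The \textbf{main obstacle} I anticipate is the bookkeeping in the middle step: showing that the restriction of $\iota_-\cdot c_G$ to the principal $\SL_2$ is genuinely the inverting automorphism and that this restriction, together with the action on $Z(G)$, pins down $C$ on all of $G(\R)$ — i.e. that two pinning-preserving involutions agreeing on $\phi(\SL_2)$ coincide. This requires knowing that $G$ is generated by $\phi(\SL_2)$ and $Z(G)^\circ$ up to isogeny, which is false in general, so one must instead argue that a pinning-preserving automorphism is determined by its outer class plus its effect on the pinning, and that $\iota_-$ is forced by the normalization $\psi\mapsto\psi^{-1}$ on the Whittaker datum; handling the disconnectedness of $G(\R)$ (e.g. $\SO(p,q)(\R)$) while keeping $C$ an honest automorphism of the real points, not merely of $G(\C)$, is the delicate part.
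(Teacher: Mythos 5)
Your starting point matches the paper's: you both use Jacobson--Morozov to produce the principal homomorphism $\phi\colon\SL_2(\R)\to G(\R)$, both identify the fundamental torus $H_f$ as the centralizer of the image of the circle $\Si^1\subset\SL_2(\R)$, and both want to realize $C$ as $\phi(j)\cdot c_G$ with $j=\mathrm{diag}(i,-i)$. The involutivity and commutativity checks you sketch are fine. But there are two genuine gaps.

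First, the crucial step --- showing that $\iota_-\cdot c_G$ actually inverts $H_f$ --- is where you hand-wave. You propose to read off the action of $C$ on $H_f$ from its known action on $T$ by a Cayley transform inside the principal $\SL_2$; but that Cayley transform is an element of $G(\C)$, not $G(\R)$, so the transport argument does not literally give the action of the $\R$-automorphism $C$ on $H_f(\R)$ without further work. You also lean on the claim that a pinning-preserving automorphism is determined by its restriction to $\phi(\SL_2)$ together with the center, which you yourself identify as false, and the proposed fix (outer class plus effect on the pinning) is not carried out. The paper avoids all of this by a cleaner argument: one produces \emph{some} element $t_0\in G(\C)$ so that $t_0\cdot c_G$ acts by $-1$ on $H_f$ (since all maximal tori of $G(\C)$ are conjugate, and $c_G$ preserves $H_f$ because it commutes with $\phi(\Si^1)$); then one observes that $\phi(j)$ and $t_0$ both invert $\phi(\Si^1)$, so $\phi(j)t_0^{-1}\in H_f(\C)$; writing $\phi(j)=s^{-1}t_0$ with $s\in H_f(\C)$ and using that $t_0c_G$ inverts $H_f$ forces $\phi(j)c_G$ to invert $H_f$ as well. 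Nothing in that chain requires the Cayley transform to be rational or the automorphism to be determined by its $\SL_2$-restriction.

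Second, your treatment of the non-quasi-split case does not work. A real reductive group that is not quasi-split has no regular unipotent element over $\R$, so there is no principal $\SL_2$ defined over $\R$, and ``complexifying and descending'' is not a substitute; Jacobson--Morozov gives you a homomorphism over $\R$ only for a rational unipotent, and here there is none. The paper does something genuinely different: it appeals to Borovoi's theorem that $H^1(\Gal(\C/\R),H_f)\to H^1(\Gal(\C/\R),G_0)$ is surjective (applied to the adjoint group), so that any innerform of the quasi-split $G_0$ is obtained by twisting with a cocycle valued in $(H_f/Z)(\C)$; then one checks by a direct calculation, using $c\bar c\in Z(\C)$, that the already-constructed Chevalley involution on $G_0(\R)$, twisted by $c$, remains defined over $\R$ on the twisted form and still inverts $H_f$. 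This cohomological reduction to the quasi-split case is the missing idea in your proposal, and it is not a routine detail --- without it the existence half of the statement (which is Adams' theorem) is not established.
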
 
\begin{proof} First we prove the result assuming $G(\R)$ to be  quasi-split with a Borel subgroup $B$ 
defined over $\R$ containing a maximal torus $T$ defined over $\R$. 

Using the Jacobson-Morozov
theorem, we  construct $\phi: \SL_2(\R) \rightarrow G(\R)$ associated to a regular unipotent element 
inside the Borel subgroup $B$ of $G(\R)$. 
We can assume that $\phi$ commutes with diagram automorphisms of $G(\R)$, i.e. with automorphism group of $\Pa= (G,B,T,\{X_\alpha\})$,
in particular with the diagram automorphism corresponding to $c_G=c_{G,\Pa}$, the Chevalley involution.
We will consider the semi-direct product $G(\R)\rtimes \langle c_G \rangle$. 

Observe that a regular unipotent element in 
$B(\R)$ has nonzero components in each of the (relative) simple root spaces. An element $t_0\in T(\C)$ which operates on all simple 
roots of $B$ by $-1$ is unique up to $Z(\C)$, and therefore can be taken to be the image of 
$j = \left ( \begin{array}{cc} i & 0 \\ 0 & -i\end{array}\right ) \in \SL_2(\C)$ 
up to central elements. 

Let $\Si^1 = \left ( \begin{array}{cc} a & b \\ -b & a\end{array}\right ) \subset \SL_2(\R)$ (with $a^2+b^2=1$). We claim that the centralizer of $\phi(\Si^1)$ inside $G(\C)$ is a fundamental torus, i.e., the centralizer is
a maximal torus  defined over $\R$ such that its split rank over $\R$ is the minimal possible among maximal tori over $\R$.
For this we first prove that the centralizer of $\phi(\Si^1)$ in $G(\C)$ is a maximal torus. 
To prove this, it suffices to prove a similar statement for the 
centralizer in $G(\C)$ of the image of the diagonal torus in $\SL_2(\R)$ which is a standard and a simple result. Next, we prove that the centralizer of $\phi(\Si^1)$ inside $G(\C)$ 
is a fundamental torus. For this, we can assume that $\phi(\Si^1)$ lands inside $K^0$, the connected component of identity of a maximal compact subgroup $K$ of $G(\R)$. Since the 
centralizer of $\phi(\Si^1)$ inside $G(\R)$ is a torus, this is also the case about the centralizer of $\phi(\Si^1)$ inside $K^0$.
Thus the centralizer of $\phi(\Si^1)$ inside $K^0$ is  a maximal torus of $K^0$, but any maximal torus of $G(\R)$ containing a maximal
torus of $K^0$ is a fundamental torus of $G(\R)$ proving that the centralizer of $\phi(\Si^1)$ in $G$ is a fundamental torus. 
 Denote the centralizer of $\phi(\Si^1)$ inside $G(\R)$ as $H_f(\R)$.

The fundamental torus $H_f$  in $G(\R)$ is normalized by $\phi(j)$.  
Since $\phi(\Si^1)$ commutes with $c_G$, $c_G$ leave $H_f$ invariant. Since $c_G$ operates on $T$ as $-w_G$,
and there is an element in $G(\R)$ which operates on $T$ as $w_G$, thus there is an element in 
$G(\R)\rtimes \langle c_G \rangle$ 
which acts by $-1$ on $T$. Since all maximal tori are conjugate in $G(\C)$, there is an element in
$G(\C)\rtimes \langle c_G \rangle$ which acts by $-1$ on $H_f$. Call this element $t_0\cdot c_G$. 
Since 
$c_G$ commutes with $\phi(\Si^1)$, and $t_0\cdot c_G$ acts by $-1$ on $H_f$, it follows that $t_0$ acts by $-1$
on $\phi(\Si^1)$; same for $\phi(j)$. Therefore, $\phi(j)t_0^{-1}$ commutes with $\phi(\Si^1)$, 
so belongs to $H_f(\C)$. Write $\phi(j) = s^{-1}t_0$ with $s \in H_f(\C)$. Then, $t_0\cdot c_G = 
s \phi(j) \cdot c_G$ 
acts by $-1$ on $H_f(\C)$, therefore since $s \in H_f(\C)$, so does    
$\phi(j) \cdot c_G$ which is an automorphism of $G(\R)$ since the inner conjugation action of 
$\phi(j)$ is real, proving the existence of the Chevalley involution (in the sense of Adams, i.e. one which operates on $H_f$ as $t\rightarrow t^{-1}$) 
for quasi-split groups, and at the same time realizing this
Chevalley involution as  $\phi(j) \cdot c_G$ where $c_G$ is the Chevalley involution associated to 
a pinning on $G$.  Since $\phi(j)$ acts by $-1$ on all simple root spaces of $T$ in $B$, $\phi(j)$ can be taken to be 
$\iota_{-}$.

For general $G(\R)$ with $G_0$ a quasi-split innerform, we appeal to a result of Borovoi, cf. [Bor],  according to which  $H^1(\Gal(\C/\R), H_f) \rightarrow 
H^1(\Gal(\C/\R), G_0)$ is surjective. We will use this result of Borovoi for the adjoint group $G_0^{\ad}=G_0/Z$, 
and as a result, any innerform of $G_0(\R)$ is obtained by twisting by a cocycle
with values in $(H_f/Z)(\C)$. It can be easily seen that if $G_c$ is obtained by twisting $G_0(\R)$ by an element
$c \in H_f(\C)$ with $ c\bar{c} \in Z(\C)$, then if $C$ is a Chevalley involution (in the sense of Adams) on $G_0(\R)$, 
$C': x \rightarrow C(c^{-1}xc)$ is 
a Chevalley involution on $G_c(\R)$ acting as $-1$ on $H_f$ which continues to be a fundamental torus in $G_c(\R)$. 
We check that $C'$ is defined over $\R$ for the real structure defining $G_c$. If $x\rightarrow \bar{x}$ is the original
complex conjugation, and $x\rightarrow \underline{x}$ is associated to the real structure $G_c$, then by definition of twisting, $\underline{x}=c\bar{x}c^{-1}$. To check that $C'$ is defined over $\R$, we need to check that $C'(\underline{x})= \underline{C'(x)}$, i.e., 
$$C'(c \bar{x}c^{-1})  = c \overline{C'(x)} c^{-1}.$$ 
Using the definition of $C'(x) = C(c^{-1}xc) = cC(x) c^{-1}$, we can rewrite the above as, 
 $$C( \bar{x}) = c \bar{c} \overline{C(x)} \bar{c}^{-1} c^{-1},$$ 
but since $c\bar{c} \in Z(C)$, and $C$ is defined over $\R$, this equation holds.
\end{proof}

\vspace{.2cm}

\noindent{\bf Acknowledgement:} This work is of older vintage, and was part of my work [DP], which is still incomplete. 
The work of Kaletha [Ka] already contains Conjecture 2 who arrived at it independently. Since this work and that of Kaletha
 do not have much else in common, it is hoped that this paper may still have some utility. There is also the work of Adams and Vogan [AV] 
formulating and proving (for $F = \R$) Conjecture \ref{conj2} just for Langlands parameters. A recent preprint [Li] 
of W-W. Li deals with the recipe on contragredient in the positive characteristic. The author thanks one of the referees of this paper for a very meticulous job.



\begin{thebibliography}{99}

\bibitem[Ad]{adams} 
J. Adams: The real Chevalley involution.  {\it Compos. Math. 150} (2014), no. 12, 2127--2142. 



\bibitem[AV]{adams-vo}J. Adams and D. A. Vogan Jr: 
Contragredient representations and characterizing the local Langlands correspondence. {\it Amer. J. Math.}
 138 (2016), no. 3, 657--682. 



\bibitem[BT]{BT}A. Borel and J. Tits: Groupes Reductifs, {\it Publ. Math. IHES} 27 (1965) 55-150.

\bibitem[BW]{BW} A. Borel and N. Wallach: Continuous cohomology, discrete subgroups, and representations of reductive groups. Second edition. Mathematical Surveys and Monographs, 67. American Mathematical Society, Providence, RI, 2000. 

\bibitem[Bor]{Bor}M. Borovoi: Galois cohomology of reductive algebraic groups over the field of real numbers; arXiv:1401.5913. 

\bibitem[GGP]{GGP} W.T. Gan, B.H. Gross, Benedict H.; D. Prasad, 
Symplectic local root numbers, central critical L values, and restriction problems in the representation theory of classical groups. 
{\it Astérisque No.}
 346 (2012), 1–-109


\bibitem[HO]{HO} V. Heiermann, E. Opdam, 
On the tempered L-functions conjecture.
{\it Amer. J. Math.}  135 (2013), no. 3, 777--799.

\bibitem[Ka]{kal} T. Kaletha: Genericity and contragredience in the local Langlands correspondence. {\it Algebra and Number Theory}
 7 (2013), no. 10, 2447--2474.

\bibitem[Li]{Li} W-W. Li: Contragredient representations over local fields of positive characteristic, 
arXiv:1802.08999 [math.RT].

\bibitem[LST]{LST} Y. Lin, B. Sun, and S. Tan: MVW-extensions of quaternionic classical groups. {\it Math. Z.} 
277 (2014), no. 1--2, 81--89. 
\bibitem[MVW]{MVW} C. Moeglin, M-F. Vign\'eras, and J.-L. Waldspurger:
Correspondances de Howe sur un corps p-adique. 
{\it Lecture Notes in Mathematics, 1291.} Springer-Verlag, Berlin, 1987.

\bibitem[DP]{DP}D. Prasad: A `relative' local Langlands correspondence,   arXiv:1512.04347.


\bibitem[Vo]{Vo} D. A. Vogan:
 The local Langlands conjecture. 
Representation theory of groups and algebras, 305--379, Contemp. Math., 145, Amer. Math. Soc., Providence, RI, 1993. 

\bibitem[Vo2]{Vo2} D. A. Vogan: Gelfand-Kirillov dimension for Harish-Chandra modules, {\it 
Invent. Math.,} Vol. 48, 1978, pp. 75--98.

\end{thebibliography}
\end{document}